\theoremstyle{plain}
\newtheorem{theorem}{Theorem}[section]
\newtheorem{proposition}[theorem]{Proposition}
\newtheorem{lemma}[theorem]{Lemma}
\newtheorem{corollary}[theorem]{Corollary}
\theoremstyle{definition}
\newtheorem*{definition}{Definition}
\theoremstyle{remark}
\newtheorem{remark}[theorem]{Remark}
\begin{document}

\title[A Cameron--Storvick type theorem on ${C_{a,b}^2[0,T]}$  with   applications]
{A Cameron--Storvick type theorem on $C_{a,b}^2[0,T]$  with   applications}

\author[J.G. Choi]{Jae Gil Choi}
\address{Jae Gil Choi \\ School of General Education \\ Dankook University \\ Cheonan 31116, Republic of Korea}
\email{jgchoi@dankook.ac.kr}

\author[D. Skoug]{David Skoug}
\address{David Skoug \\ Department of Mathematics \\ University of Nebraska-Lincoln  \\ Lincoln, NE 68588-0130, USA}
\email{dskoug@math.unl.edu}


\subjclass{Primary 46G12; Secondary 28C20, 60J65}
\keywords{generalized analytic Feynman integral,
product function space generalized Brownian motion process,
Kallianpur and Bromley  Fresnel class, 
Cameron--Storvick type theorem}

\begin{abstract}
The purpose of this paper is to establish a  very general  Cameron--Storvick 
theorem involving the  generalized analytic Feynman integral of functionals on the product 
function space $C_{a,b}^2[0,T]$. The  function space $C_{a,b}[0,T]$ can be induced 
by the generalized Brownian motion process associated with continuous  functions $a$ 
and $b$. To do this we first introduce the class $\mathcal F_{A_1,A_2}^{\,\,a,b}$ of 
functionals on $C_{a,b}^2[0,T]$ which is a generalization of the Kallianpur and 
Bromley  Fresnel class $\mathcal F_{A_1,A_2}$. We  then proceed to establish a 
Cameron--Storvick type theorem on the product function space $C_{a,b}^2[0,T]$. 
 Finally we use our Cameron--Storvick type theorem to obtain several 
meaningful results and examples.  
\end{abstract}

\maketitle

\setcounter{equation}{0}
\section{Introduction}\label{sec:1}

\par
Let $C_0[0,T]$ denote one-parameter Wiener space, that is, the space
of  all  real-valued continuous functions $x$ on $[0,T]$ with $x(0)=0$.
Let $\mathcal{M}$ denote the class of  all Wiener measurable subsets
of $C_0[0,T]$ and let $m_w$ denote Wiener measure.  
Then, as is  well-known,  $(C_0[0,T],\mathcal{M},m_w)$ is 
a complete  measure space. 
 
\par
In \cite{Cameron51} Cameron  established an integration by parts formula 
for the Wiener measure $m_w$. 
More precisely,  in \cite{Cameron51}, Cameron introduced the first 
variation (a kind of  G\^ateaux  derivative) of functionals  on the
classical Wiener space $C_0[0,T]$ and established a formula  involving  
the Wiener integral of the first variation.
 This was the first  infinite dimensional 
integration by parts formula.   In \cite{Donsker} Donsker also established 
this formula using a different method, and applied it to study Fr\'echet--Volterra 
differential equations. In \cite{Kuo74,KL11}   Kuo and Lee established 
an integration by parts formula for abstract Wiener space which they 
  then used to evaluate various functional integrals.

\par
In \cite{CS91}, Cameron 
and Storvick  also established an integration by parts formula 
involving the  analytic  Feynman integral of  functionals on $C_0[0,T]$. 
They also applied their  celebrated parts formula to establish the existence 
of the analytic  Feynman integral of unbounded  functionals on $C_0[0,T]$.
The integration by  parts formula on $C_0[0,T]$  suggested 
in \cite{CS91} was improved in \cite{PSS98} to study the  parts formulas 
involving the analytic Feynman integral and the analytic Fourier--Feynman transform. 
Since then the parts formula for the analytic Feynman integral is called the
Cameron--Storvick theorem by many mathematicians.

\par
The Cameron--Storvick type theorem and  related topics were also  developed  for 
functionals on the very general function  space  $C_{a,b}[0,T]$  in  \cite{CCS03,CS03}. 
The function space $C_{a,b}[0,T]$, induced by  
generalized Brownian motion process (GBMP), was introduced by J. Yeh \cite{Yeh71,Yeh73}.

\par
A GBMP on a probability space $(\Omega,\Sigma, P)$ and a time interval $[0,T]$ 
is a Gaussian  process  $Y \equiv\{Y_t\}_{t\in [0,T]}$ such that $Y_0=c$ almost
surely for some constant $c \in \mathbb  R$, and for any set of time  
moments $0= t_0 < t_1< \cdots<t_n \le T$ and any Borel set $B\subset \mathbb R^n$, 
the measure $P(I_{t_1,\ldots,t_n,B})$ of the cylinder set $I_{t_1,\ldots,t_n,B}$ of 
the form 
\[
I_{t_1,\ldots,t_n,B} 
=\big\{\omega\in \Omega:(Y(t_1,\omega),\ldots,Y(t_n,\omega))\in B \big\}
\] 
is equal to
\[
\begin{aligned}
&\bigg( (2\pi )^n \prod\limits_{j=1}^n\big(b(t_j)-b(t_{j-1})\big) \bigg)^{-1/2} \\ 
&\quad \times\int_{B}
\exp \bigg\{- \frac 12 \sum\limits_{j=1}^n
   \frac {((\eta_j-a(t_j))-(\eta_{j-1}-a(t_{j-1})))^2}
         {b(t_j)-b(t_{j-1})} \bigg\}
d\eta_1\cdots d\eta_n, 
\end{aligned}
\]
where $\eta_0=c$,    $a(t)$ is a continuous real-valued function on  $[0,T]$, 
and $b(t)$ is a  increasing continuous real-valued function on $[0, T]$. 
Thus, the  GBMP $Y$ is determined by the continuous functions $a(\cdot)$ 
and $b(\cdot)$. For more details, see \cite{Yeh71,Yeh73}. Note that when $c=0$, 
$a(t)\equiv 0$ and $b(t)=t$ on $[0,T]$, the GBMP is  the standard Brownian motion  
(Wiener process).
In this paper we set $c = a(0)=b(0)=0$. Then the function space $C_{a,b}[0,T]$ 
induced by the GBMP $Y$ determined by the  $a(\cdot)$ and $b(\cdot)$  
can be considered as the space of continuous sample paths of $Y$. 

\par
In  this paper, we  establish a  general  Cameron--Storvick 
theorem involving the  generalized analytic Feynman integral of functionals 
on the product  function space $C_{a,b}^2[0,T]$.  We also present a meaningful 
example  to which our  Cameron--Storvick type theorem can be applied.  
To do this, we  introduce the class $\mathcal F_{A_1,A_2}^{\,\,a,b}$ 
of functionals on $C_{a,b}^2[0,T]$ which is a generalization of the Kallianpur and  Bromley Fresnel 
class $\mathcal F_{A_1,A_2}$,  see \cite{KB84}.

\setcounter{equation}{0}
\section{Preliminaries}\label{sec:2}

In this section  we first give a brief background of some ideas and results
which are needed to establish our new results in Sections  \ref{sec:CS-thm},
\ref{sec:fab}, and \ref{sec:last} below.

\par
Let  $a(t)$ be an absolutely continuous real-valued  function on $[0,T]$
with $a(0)=0$ and  $a'(t)\in L^2[0,T]$,  and let $b(t)$ be a strictly
increasing, continuously differentiable real-valued function with $b(0)=0$
and $b'(t)>0$ for each $t \in [0,T]$. The generalized Brownian motion
process $Y$ determined by $a(t)$ and $b(t)$ is a Gaussian process  with
mean function  $a(t)$ and covariance function $r(s,t)=\min\{b(s),b(t)\}$.

\par
We consider the  function space $(C_{a,b}[0,T],\mathcal W(C_{a,b}[0,T]),\mu)$
induced by the generalized Brownian motion process $Y$, where $C_{a,b}[0,T]$
denotes the set of continuous sample paths of the generalized Brownian motion
process $Y$ and  $\mathcal W(C_{a,b}[0,T])$ is the $\sigma$-field of all
$\mu$-Carath\'eodory measurable subsets  of $C_{a,b}[0,T]$.
For the precise procedure used to construct this function space, we refer to the
references \cite{CC16,CCK15,CCS03,CCS19,CS03,Yeh71,Yeh73}.

\par
We note that  the coordinate process
defined by  $e_t(x)=x(t)$ on $C_{a,b}[0,T]\times [0,T]$ is also the generalized
Brownian motion process determined by $a(t)$ and $b(t)$, i.e., for each $t\in[0,T]$,
$e_t(x)\sim N(a(t),b(t))$, and the process $\{e_t: 0\le t\le T\}$  has nonstationary
and independent increments.

\par
Recall that the process $\{e_t: 0\le t\le T\}$ on $C_{a,b}[0,T]$ is a continuous process.
Thus the function space $C_{a,b}[0,T]$ reduces to the classical  Wiener space $C_0[0,T]$,
considered in papers \cite{Cameron51,CS91,Johnson82,JS79,PSS98}
if and only if $a(t)\equiv 0$ and $b(t)=t$ for all $t\in[0,T]$.

\par
In \cite{CCS03,CCS10,CS03}, the generalized analytic Feynman integral
and the generalized analytic Fourier--Feynman transform of
functionals on $C_{a,b}[0,T]$ were investigated. The functionals considered
in \cite{CCS03,CCS10,CS03} are associated with the separable Hilbert space
\[
L_{a,b}^2[0,T]
=\bigg\{   v :  \int_{0}^{T} v^2 (s) d b(s)  <+\infty  \hbox{ and }
\int_0^T v^2 (s) d|a|(s)   < +\infty \bigg\}
\]
where $|a|(\cdot)$ denotes the total variation   function of $a(\cdot)$.
The inner product on $L_{a,b}^2[0,T]$ is given by the formula
\[
(u,v)_{a,b}
=\int_0^T u(s)v(s)dm_{|a|,b}(s)
\equiv \int_0^T u(s)v(s)d[b(s)+|a|(s)]
\]
where $m_{|a|,b}$ is the Lebesgue--Stieltjes measure induced by the increasing
function $|a|(\cdot)+b(\cdot)$ on $[0,T]$. We 
note that $\| u\|_{a,b}\equiv\sqrt{(u,u)_{a,b}} =0$ 
if and only if $u(t)=0$ a.e. on $[0,T]$.

\par
The following linear   subspace of $C_{a,b}[0,T]$
plays an important role throughout this paper.

\par
Let
\[
C_{a,b}'[0,T]
 =\bigg\{ w \in C_{a,b}[0,T] : w(t)=\int_0^t z(s) d b(s) 
       \hbox{  for some   } z \in L_{a,b}^2[0,T]  \bigg\}.        
\]
For $w\in C_{a,b}'[0,T]$, 
with $w(t)=\int_0^t z(s) d b(s)$ for $t\in [0,T]$, 
let $D : C_{a,b}'[0,T] \to L_{a,b}^2[0,T]$ 
be defined by the formula 
\begin{equation}\label{eq:Dt}
D w(t)= z(t)=\frac{w'(t)}{b'(t)}. 
\end{equation}
Then $C_{a,b}' \equiv C_{a,b}'[0,T]$ with inner product
\[
(w_1, w_2)_{C_{a,b}'}
= \int_0^T  D w_1(s)  D w_2(s)d b(s)  
=\int_0^T z_1(s)z_2(s) d b(s)    
\] 
is a separable  Hilbert space.

\par
Note that  the two separable Hilbert spaces $L_{a,b}^2[0,T]$ and $C_{a,b}'[0,T]$
are (topologically)  homeomorphic under the linear operator given by equation \eqref{eq:Dt}.
The inverse operator of $D$ is given by
\begin{equation}\label{eq:D-inverse}
(D^{-1}z)(t)=\int_0^t z(s) d b(s),\quad t\in[0,T].
\end{equation}
 For a more detailed study of the inverse operator $D^{-1}$
of $D$, see \cite{CCS10-pan}.

 Recall that above, 
as well as in papers \cite{CCS03,CCS10,CS03},
we require that $a:[0,T]\to \mathbb R$ 
is an absolutely continuous function 
with $a(0)=0$ and with $\int_0^T |a'(t)|^2 d t<\infty$.
Our conditions on $b:[0,T]\to \mathbb R$
imply that $0<\delta <b'(t)<   M$
for some positive real numbers $\delta$ and $M$, and  all $t\in[0,T]$.
In this paper, in addition to the conditions put on $a(t)$ above, we now add the condition
\begin{equation}\label{eq:condi-mean01} 
\int_0^T |a'(t)|^2 d|a|(t)< +\infty.
\end{equation}
One can see that
the function $a: [0,T]\to\mathbb R$ satisfies 
the condition  \eqref{eq:condi-mean01} if and only if $a(\cdot)$ is an element of $C_{a,b}'[0,T]$.
Under the  condition \eqref{eq:condi-mean01}, we also observe that for each  $w\in C_{a,b}'[0,T]$ with $Dw=z$,
\[
(w,a)_{C_{a,b}'}=\int_0^T Dw(t) Da(t) db(t)=\int_0^T z(t)a'(t)dt=\int_0^T z(t)da(t).
\]

\par
 For each $w\in C_{a,b}'[0,T]$, the  Paley--Wiener--Zygmund (PWZ)  stochastic integral 
$(w,x)^{\sim}$ is given by the formula
\begin{equation}\label{eq:pwz-ab}
(w,x)^{\sim}
=\lim_{n\to\infty}\int_{0}^{T}\sum_{j=1}^{n}(w,g_{j})_{C_{a,b}'}Dg_{j}(t)dx(t)
\end{equation}
for $\mu$-a.e. $x\in C_{a,b}[0,T]$ where $\{g_j\}_{j=1}^{\infty}$ is a complete 
orthonormal set of functions in $C_{a,b}'[0,T]$ such that for each $j\in\mathbb N$, 
$Dg_j$ is of bounded variation on $[0,T]$. 
 For a more detailed study of the  space $C_{a,b}'[0,T]$ and the PWZ stochastic integral
given by \eqref{eq:pwz-ab}, see \cite{CCK15,CCS19}.

\par
In \cite{PS08}, Pierce and Skoug used the  
the inner product $(\cdot,\cdot)_{a,b}$ on $L_{a,b}^2[0,T]$ rather than  
the inner product $(\cdot,\cdot)_{C_{a,b}'}$ 
on $C_{a,b}'[0,T]$  to study the PWZ stochastic integral
and the related  integration formula  on  the function space $C_{a,b}[0,T]$.

\par
The  generalized analytic  Feynman integral on the function space $C_{a,b}[0,T]$
was  first defined and studied in \cite{CCS03,CS03}, and the  study of this 
integral has continued in \cite{CC16,CCK15,CCS10,CC12}. 
We assume  familiarity with
  \cite{CCK15,CCS03,CCS10,CS03,CC12}  and adopt the concepts and the definitions 
of the  generalized analytic  Feynman integral on   $C_{a,b}[0,T]$.

Based on the references \cite{JL00,JS79,KB84}, we present several concepts
which involve the scale-invariant measurability  to define 
a generalized analytic Feynman integral of functionals on the product  space 
$C_{a,b}^2[0,T]$. 

Let $(C_{a,b}^2[0,T],\mathcal W (C_{a,b}^2[0,T]),\mu^2)$
be the product function space,  where\\ $C_{a,b}^2[0,T]=C_{a,b}[0,T]\times C_{a,b}[0,T]$,
$\mathcal W (C_{a,b}^2[0,T])
\equiv \mathcal W (C_{a,b}[0,T]) \otimes \mathcal W (C_{a,b}[0,T])$
denotes the $\sigma$-field generated by  measurable rectangles $A\times B$
with $A,B\in \mathcal W (C_{a,b}[0,T])$, and  $\mu^2\equiv \mu\times\mu$.
A subset $B$ of   $C_{a,b}^{2}[0,T]$ 
is said to be  scale-invariant measurable
provided  $\{(\rho_1 x_1, \rho_2 x_2): (x_1, x_2)\in B\}$ 
is $\mathcal{W}(C_{a,b}^2[0,T])$-measurable
for every $\rho_1>0$ and $\rho_2>0$, 
and a scale-invariant measurable subset  $N$ of $C_{a,b}^{2}[0,T]$ 
is said  to be scale-invariant null  provided 
$(\mu\times\mu)(\{(\rho_1 x_1, \rho_2 x_2): (x_1, x_2)\in N\} )=0$
for every $\rho_1>0$ and $\rho_2>0$. 
A property that holds except on a scale-invariant null set 
is  said to hold s-a.e. on $C_{a,b}^2[0,T]$.
A functional $F$ on  $C_{a,b}^2[0,T]$ is said to be 
scale-invariant measurable  provided 
$F$ is defined on a scale-invariant measurable set 
and $F(\rho_1\,\,\cdot\,, \rho_2\,\,\cdot\,)$ 
is $\mathcal{W}(C_{a,b}^2[0,T])$-measurable 
for every $\rho_1>0$ and $\rho_2>0$.
If two functionals $F$ and $G$ defined on $C_{a,b}^2[0,T]$ 
are equal s-a.e., 
then we write $F\approx G$.

\par
We denote the product function space integral of a 
$\mathcal{W}(C_{a,b}^2[0,T])$-measurable functional $F$ by
\[ 
E[F]
  \equiv E_{\vec x}[F(x_1, x_2)]
  = \int_{C_{a,b}^2[0,T]}F(x_1,x_2)d(\mu\times\mu) (x_1,x_2)
\]
whenever the integral exists.

\par
Throughout this paper,
let $\mathbb C$, ${\mathbb C}_+$ and $\widetilde{\mathbb C}_+$ 
denote the complex numbers, the complex numbers with positive real part
and the nonzero complex numbers  with nonnegative real part, 
respectively.
Furthermore, for each $\lambda \in \widetilde{\mathbb C}_+$, 
$\lambda^{1/2}$ denotes the principal square root of $\lambda$; i.e., $\lambda^{1/2}$ is 
always chosen to have positive  real part, so that $\lambda^{-1/2}=(\lambda^{-1})^{1/2}$ is in $\mathbb C_+$. 
We also  assume that every functional $F$ on $C_{a,b}^2[0,T]$ 
we consider is s-a.e. defined and is scale-invariant measurable.

The following definition is due to Choi, Skoug and Chang \cite{CCS19,CSC13}
 
\begin{definition}\label{def:gfi}
Let $\mathbb{C}_+^2=\{ \vec\lambda =(\lambda_1,\lambda_2) \in \mathbb C^{2} : 
                  \mathrm{Re} (\lambda_j)  > 0 \hbox{ for } j=1,2 \}$ 
and let
$\widetilde{\mathbb C}_+^2=\{ \vec\lambda=(\lambda_1 , \lambda_2) \in \mathbb C^{2} : 
                  \lambda_j \ne 0 \hbox{ and }  \mathrm{Re} (\lambda_j)\ge 0    
                  \hbox{ for } j=1,2 \}$.
Let $F: C_{a,b}^2[0,T] \to \mathbb C $ be a scale-invariant measurable functional such that 
the function space integral
\[
J(\lambda_1, \lambda_2)=
 \int_{C_{a,b}^2[0,T]}F(\lambda_1^{-1/2}x_1, \lambda_2^{-1/2}x_2) 
   d (\mu\times\mu)(x_1, x_2)\\
\]
exists and is finite for each $\lambda_1>0$ and $\lambda_2>0$.
If there exists a function $J^*(\lambda_1,\lambda_2)$  
analytic on ${\mathbb C}_+^2$ such that  
$J^*(\lambda_1,\lambda_2)=J(\lambda_1,\lambda_2)$ 
for all $\lambda_1>0$ and $\lambda_2 >0$,
then $J^*(\lambda_1,\lambda_2)$ is defined to be 
the  analytic function space  integral of $F$ 
over $C_{a,b}^2[0,T]$  with parameter $\vec\lambda = (\lambda_1,\lambda_2)$, 
and for $\vec\lambda \in \mathbb C_+^2$ we write 
\[
E^{\mathrm{an}_{\vec\lambda}}[F] 
\equiv E_{\vec x}^{\mathrm{an}_{\vec\lambda}}[F(x_1, x_2)]
\equiv E_{x_1,x_2}^{\mathrm{an}_{(\lambda_1,\lambda_2)}}[F(x_1, x_2)]
= J^*(\lambda_1,\lambda_2).  
\]
Let $q_1$ and $q_2$ be nonzero real numbers. 
Let $F$ be a functional such that
$E^{\mathrm{an}_{\vec\lambda}}[F]$ exists 
for all $\vec\lambda \in \mathbb C_+^2$.
If the following limit exists, 
we call it the generalized analytic Feynman integral of $F$ 
with parameter $\vec q=(q_1,q_2)$ 
and we write
\[
E^{\mathrm{ anf}_{\vec q}}[F]
 \equiv E_{\vec x}^{\mathrm{ anf}_{\vec q}}[F(x_1, x_2)]
 \equiv E_{x_1,x_2}^{\mathrm{ anf}_{(q_1,q_2)}}[F(x_1, x_2)]
  =\lim_{\substack{\vec\lambda \to -i \vec q\\\vec\lambda \in \mathbb C_+^2} } E^{\mathrm{ an}_{\vec\lambda}}[F].
\]
\end{definition}

\setcounter{equation}{0}
\section{A Cameron--Storvick type theorem on $C_{a,b}^2[0,T]$}\label{sec:CS-thm}

In \cite{Cameron51}, 
Cameron (also see \cite[Theorem A]{CS91}) expressed  the Wiener integral 
of the first variation of a functional $F$ on the Wiener space $C_0[0,T]$
in terms of the Wiener integral of the product of $F$ by a linear functional, 
and in \cite[Theorem 1]{CS91}, Cameron and Storvick obtained  a similar 
result for the analytic Feynman integral on $C_0[0,T]$. In \cite[Theorem 2.4]{CSY01},   
Chang, Song and Yoo also obtained a Cameron--Storvick  theorem  on abstract Wiener 
spaces. In \cite{CS03},  Chang and Skoug developed  these results for  functionals 
on the function space $C_{a,b}[0,T]$.

In this section,  we establish a Cameron--Storvick  theorem 
for the generalized analytic Feynman integral of functionals on  the product 
function space $C_{a,b}^2[0,T]$. To do this we first  give the definition of 
the first variation of a functional $F$ on $C_{a,b}^2[0,T]$.


\begin{definition}
Let $F$ be a  functional on $C_{a,b}^2[0,T]$
and let $g_1$ and $g_2$ be functions in   $C_{a,b}[0,T]$.
Then
\begin{equation}\label{eq:1st}
\begin{aligned}
\delta F(x_1,x_2|g_1,g_2)
& =\frac{\partial}{\partial h}\Big(F(x_1+h g_1,x_2) 
+ F(x_1, x_2+h g_2)\Big)\bigg|_{h=0}\\
& =\frac{\partial}{\partial h}F(x_1+h g_1,x_2)\bigg|_{h=0}
+\frac{\partial}{\partial h}F(x_1, x_2+h g_2)\bigg|_{h=0} 
\end{aligned}
\end{equation}
(if it exists) is called the first variation of $F$ 
in the direction of $(g_1, g_2)$.
\end{definition} 

\par
Throughout this section, 
when working with $\delta F(x_1, x_2|g_1,g_2)$,
we will always require $g_1$ and $g_2$ 
to be functions in   $C_{a,b}'[0,T]$.

We first quote  the translation theorem \cite{CS03}  for  the function 
space integral using our notations.


\begin{lemma}[Translation Theorem]\label{thm:tt}
Let $F\in L^1 (C_{a,b}[0,T])$ and let $w_0 \in C_{a,b}'[0,T]$. Then
\begin{equation}\label{eq:translation}
\begin{aligned}
&\int_{C_{a,b}[0,T]}F(x+w_0)d\mu(x)\\
&=\exp\bigg\{-\frac{1}{2}\|w_0\|_{C_{a,b}'}^{2}-(w_0,a)_{C_{a,b}'}\bigg\}
 \int_{C_{a,b}[0,T]}F(x)\exp\{(w_0,x)^{\sim}\}d\mu(x).
\end{aligned}
\end{equation}
\end{lemma}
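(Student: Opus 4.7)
The plan is to prove the translation formula by a standard cylinder-functional argument, exploiting the explicit Gaussian finite-dimensional distributions of the GBMP described in the introduction. First, I would note that by linearity and an approximation argument (simple functions plus dominated convergence, using that $F \in L^1$ and that $\mu$ is translation-quasi-invariant along $C_{a,b}'[0,T]$), it suffices to establish \eqref{eq:translation} for bounded continuous cylinder functionals of the form $F(x) = f(x(t_1), \ldots, x(t_n))$ with $0 = t_0 < t_1 < \cdots < t_n \le T$.

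Second, for such an $F$, I would write both sides as explicit Gaussian integrals on $\mathbb{R}^n$ using the density given in Section \ref{sec:1}, with covariances $b(t_j)-b(t_{j-1})$ and means $a(t_j)-a(t_{j-1})$. On the left-hand side the integrand becomes $f(\eta_1 + w_0(t_1), \ldots, \eta_n + w_0(t_n))$; substituting $\xi_j = \eta_j + w_0(t_j)$ recasts the integral against $f(\xi_1, \ldots, \xi_n)$ weighted by $\exp\{Q_n(w_0, \xi)\}$, where
\[
Q_n(w_0, \xi) = -\frac{1}{2}\sum_{j=1}^n \frac{(\Delta w_0)_j^2}{\Delta b_j} + \sum_{j=1}^n \frac{(\Delta w_0)_j(\Delta \xi_j - \Delta a_j)}{\Delta b_j},
\]
with $(\Delta w_0)_j = w_0(t_j) - w_0(t_{j-1})$ and analogously for $\Delta b_j$, $\Delta \xi_j$, $\Delta a_j$.

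Third, I would identify the three pieces of $Q_n$ with the claimed exponent. Writing $w_0(t) = \int_0^t z(s)\,db(s)$ and passing to a refining sequence of partitions, the quadratic sum is a Riemann-Stieltjes approximation of $\int_0^T z^2(s)\,db(s) = \|w_0\|_{C_{a,b}'}^2$. The cross term involving $\Delta a_j$ converges to $\int_0^T z(s)\,da(s) = (w_0, a)_{C_{a,b}'}$ by the identity noted just after \eqref{eq:condi-mean01}. The remaining cross term matches the finite-sum defining the PWZ integral \eqref{eq:pwz-ab}: for a cylinder functional built from the first $n$ basis vectors $\{g_j\}$ one may in fact take the partition so that this sum equals the $n$-th partial sum $\sum_{j=1}^n (w_0, g_j)_{C_{a,b}'} \int_0^T Dg_j(t)\,dx(t)$ exactly, which converges to $(w_0, x)^{\sim}$ for $\mu$-a.e.\ $x$.

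The main obstacle is the last identification: aligning the finite-dimensional cross term with the PWZ stochastic integral uniformly enough to justify taking the limit inside the outer integral. I would handle this by first proving \eqref{eq:translation} for cylinder functionals adapted to the complete orthonormal set $\{g_j\}$ in $C_{a,b}'[0,T]$ (so that the equality of the cross term with the PWZ partial sum is exact), and then using the density of such cylinder functionals in $L^1(C_{a,b}[0,T])$ together with the bound $\int \exp\{(w_0,x)^{\sim}\}d\mu(x) = \exp\{\tfrac{1}{2}\|w_0\|_{C_{a,b}'}^2 + (w_0,a)_{C_{a,b}'}\}$, which controls the right-hand side under the $L^1$ approximation and yields the full statement.
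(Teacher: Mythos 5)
The paper does not prove this lemma: it is quoted verbatim from \cite{CS03} (``We first quote the translation theorem\dots''), so there is no internal proof to compare against. Judged on its own, your outline is the standard Cameron--Martin argument, and the finite-dimensional computation is correct: after the substitution $\xi_j=\eta_j+w_0(t_j)$ the exponent $Q_n$ is exactly as you write it, the quadratic sum is $\|P_nw_0\|_{C_{a,b}'}^2$, the $\Delta a_j$ cross term is $(P_nw_0,a)_{C_{a,b}'}$, and the remaining cross term is $(P_nw_0,x)^{\sim}$, where $P_n$ is the orthogonal projection onto the span of the normalized increment functions $D^{-1}\bigl(\chi_{(t_{j-1},t_j]}/\sqrt{\Delta b_j}\bigr)$. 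However, your device for closing the cross-term identification is not right as stated: there is no choice of partition for which the discrete sum $\sum_j\Delta w_{0,j}\Delta x_j/\Delta b_j$ literally equals the $n$-th partial sum of the PWZ integral \eqref{eq:pwz-ab}, because the point-evaluation coordinates and the coordinates $(g_j,x)^{\sim}$ relative to an arbitrary CONS are different coordinate systems. You must either (a) pass to the limit over refining partitions, using that $P_nw_0\to w_0$ in $C_{a,b}'[0,T]$ so that $(P_nw_0,x)^{\sim}\to(w_0,x)^{\sim}$ in $L^2(\mu)$, together with a uniform integrability bound on $\exp\{(P_nw_0,x)^{\sim}\}$; or (b) keep the partition fixed and note that $(w_0,x)^{\sim}=(P_nw_0,x)^{\sim}+((I-P_n)w_0,x)^{\sim}$ with the second summand Gaussian and independent of the cylinder coordinates, so its exponential integrates to $\exp\{\tfrac12\|(I-P_n)w_0\|_{C_{a,b}'}^2+((I-P_n)w_0,a)_{C_{a,b}'}\}$ and the formula for general $w_0$ and cylinder $F$ is exact with no limit needed.

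The more serious gap is the final extension step. Dominated convergence does not apply, and $L^1(\mu)$-convergence $F_k\to F$ controls neither side of \eqref{eq:translation}: the left side is an integral against the translated measure $\mu(\cdot-w_0)$ and the right side carries the unbounded weight $\exp\{(w_0,x)^{\sim}\}$, and neither of these measures is dominated by a constant multiple of $\mu$, so knowing $\int\exp\{(w_0,x)^{\sim}\}d\mu<\infty$ does not let you interchange the limit. (There is even a circularity in invoking quasi-invariance of $\mu$ along $C_{a,b}'[0,T]$, since that is essentially the content of the lemma.) The standard repair is to prove the identity first for indicators of cylinder sets, observe that both sides then define finite measures on $\mathcal W(C_{a,b}[0,T])$ that agree on the generating $\pi$-system of cylinder sets, conclude they agree everywhere by uniqueness of finite measures, and then obtain the statement for $F\in L^1$ by the usual simple-function/monotone-class machine; this also makes precise the sense in which $F(\cdot+w_0)$ is integrable exactly when $F(\cdot)\exp\{(w_0,\cdot)^{\sim}\}$ is. With those two repairs your argument goes through.
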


\begin{theorem}[An integration by parts formula]\label{thm:CS}
Let $g_1$ and $g_2$ be  nonzero  functions in $C_{a,b}'[0,T]$.
Let $F(x_1,x_2)$ be $\mu\times \mu$-integrable over $C_{a,b}^2[0,T]$. 
Assume that   $F$ has  a    first variation
$\delta F (x_1,x_2|g_1,g_2)$ for all $(x_1,x_2)\in C_{a,b}^2[0,T]$  
such that    for some  $\gamma>0$,
\[
\sup_{|h|\le \gamma }
\big|\delta F   (x_1 +h  g_1,x_2 +h  g_2|  g_1,g_2)\big|
\]
is $\mu\times \mu$-integrable over $C_{a,b}^2[0,T]$
as a functional  of $(x_1,x_2)\in C_{a,b}^2[0,T]$.  
Then 
\begin{equation}\label{eq:CA-basic} 
\begin{aligned}
&E_{\vec x}[ \delta F(x_1,x_2|g_1,g_2)]\\
&  \quad  =
 E_{\vec x}\big[ F(x_1, x_2)\big\{(g_1, x_1)^{\sim}
   +(g_2, x_2)^{\sim}\big\}\big]\\
& \qquad 
-\big\{(g_1, a)_{C_{a,b}'}+(g_2, a)_{C_{a,b}'}\big\}
E_{\vec x}[ F(x_1,x_2)  ].
\end{aligned}
\end{equation}
\end{theorem}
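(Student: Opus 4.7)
The plan is to combine the translation theorem (Lemma \ref{thm:tt}) with differentiation under the integral sign. I would consider the one-parameter family
\[
\phi(h) := E_{\vec x}[F(x_1+hg_1, x_2+hg_2)], \qquad |h|\le \gamma,
\]
and compute $\phi'(0)$ in two different ways; equating the two expressions will yield \eqref{eq:CA-basic}.

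First, by applying Lemma \ref{thm:tt} to the $x_1$-marginal with $w_0 = hg_1$, then to the $x_2$-marginal with $w_0 = hg_2$, and using Fubini's theorem to commute the two integrations, one obtains
\[
\phi(h) = \exp\Big\{-\tfrac{h^2}{2}\big(\|g_1\|^2_{C_{a,b}'} + \|g_2\|^2_{C_{a,b}'}\big) - h\big((g_1,a)_{C_{a,b}'}+(g_2,a)_{C_{a,b}'}\big)\Big\}\Psi(h),
\]
where $\Psi(h) = E_{\vec x}\big[F(x_1,x_2)\exp\{h[(g_1,x_1)^\sim + (g_2,x_2)^\sim]\}\big]$. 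Differentiating this product at $h=0$ via the product rule produces precisely the right-hand side of \eqref{eq:CA-basic}. The interchange of $d/dh$ and $E_{\vec x}[\cdot]$ needed to differentiate $\Psi$ at $h=0$ is routine, since $(g_j,x_j)^\sim$ is Gaussian under $\mu$ and the $\mu\times\mu$-integrability of $F$ combined with exponential bounds on $h\mapsto \exp\{h[(g_1,x_1)^\sim+(g_2,x_2)^\sim]\}$ supplies an integrable dominator on a neighborhood of $0$.

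Second, from \eqref{eq:1st} the one-variable map $h\mapsto F(x_1+hg_1, x_2+hg_2)$ is differentiable with derivative $\delta F(x_1+hg_1, x_2+hg_2|g_1,g_2)$, so the mean value theorem gives, for $0<|h|\le \gamma$,
\[
\left|\frac{F(x_1+hg_1, x_2+hg_2) - F(x_1,x_2)}{h}\right| \le \sup_{|s|\le \gamma}\big|\delta F(x_1+sg_1, x_2+sg_2|g_1,g_2)\big|.
\]
The integrability hypothesis and the dominated convergence theorem then yield $\phi'(0) = E_{\vec x}[\delta F(x_1,x_2|g_1,g_2)]$. Equating the two computations of $\phi'(0)$ will produce the stated identity.

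The main obstacle is the rigorous identification $\frac{d}{dh}F(x_1+hg_1, x_2+hg_2)\big|_{h=0} = \delta F(x_1,x_2|g_1,g_2)$: definition \eqref{eq:1st} introduces $\delta F$ as a \emph{sum} of two partial G\^ateaux derivatives, so equating this with the joint directional derivative requires decomposing the difference $F(x_1+hg_1,x_2+hg_2) - F(x_1,x_2)$ through the intermediate point $(x_1+hg_1, x_2)$ and applying the mean value theorem to each partial difference separately, each bounded by the supremum appearing in the hypothesis.
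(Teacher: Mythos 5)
Your proposal is correct and follows essentially the same route as the paper: Fubini plus the translation theorem (Lemma \ref{thm:tt}) applied to each marginal, followed by differentiation under the integral sign at $h=0$, with the integrability hypothesis on $\sup_{|h|\le \gamma}|\delta F(x_1+hg_1,x_2+hg_2|g_1,g_2)|$ serving as the dominating function. The only structural difference is that you translate both coordinates simultaneously through the single family $\phi(h)=E_{\vec x}[F(x_1+hg_1,x_2+hg_2)]$, whereas the paper differentiates the sum $E_{\vec x}[F(x_1+hg_1,x_2)]+E_{\vec x}[F(x_1,x_2+hg_2)]$ read off directly from definition \eqref{eq:1st} — after first noting, as you do at the end of your argument, that the diagonal $h$-derivative and the sum of the two partial variations coincide — and both computations yield the identical right-hand side of \eqref{eq:CA-basic}.
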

\begin{proof}
First note that 
\[
\begin{aligned}
&\delta F (x_1+h  g_1,x_2+h g_2 |  g_1,g_2)\\
& =\frac{\partial}{\partial \lambda}
   F (x_1 +h g_1+\lambda g_1,x_2+h g_2  )\bigg|_{\lambda=0}
+\frac{\partial}{\partial \lambda}
   F (x_1+h g_1, x_2 +h g_2+\lambda g_2  )\bigg|_{\lambda=0}\\
& =\frac{\partial}{\partial \lambda}
   F (x_1 +(h  +\lambda) g_1,x_2+h g_2  )\bigg|_{\lambda=0}
+\frac{\partial}{\partial \lambda}
   F (x_1+h g_1, x_2 +(h +\lambda) g_2  )\bigg|_{\lambda=0}\\
& =\frac{\partial}{\partial \mu}
   F (x_1 + \mu g_1,x_2+h g_2  )\bigg|_{\mu=h}
+\frac{\partial}{\partial \mu}
   F (x_1+h g_1, x_2 +\mu g_2  )\bigg|_{\mu=h}\\
& =2 \frac{\partial}{\partial h}
   F (x_1 + h g_1,x_2+h g_2  ).
\end{aligned}
\]
But since 
\[
\sup_{|h|\le \gamma }
 \bigg| \frac{\partial}{\partial h}
   F (x_1 + h g_1,x_2+h g_2  )\bigg|
\]
is $\mu\times\mu$-integrable,
\[
\frac{\partial}{\partial h}   F (x_1 + h g_1,x_2+h g_2  )
\]
is $\mu\times\mu$-integrable  for sufficiently small values of $h$.
Hence by the Fubini theorem  and equation \eqref{eq:translation},
it follows that 
\[
\begin{aligned}
&E_{\vec x}[ \delta   F(x_1, x_2|g_1,g_2)]\\
& = E_{\vec x}\bigg[ \frac{\partial}{\partial h} \Big(  F(x_1+h g_1, x_2)  
+ F(x_1, x_2 +h g_2 ) \Big)\bigg|_{h=0}\bigg]\\
& =  \frac{\partial}{\partial h} E_{\vec x}\big[ F(x_1+h g_1, x_2)  
+   F(x_1, x_2 +h g_2 ) \big] \bigg|_{h=0} \\
& = \frac{\partial}{\partial h} E_{\vec x}\big[  F(x_1+h g_1, x_2) \big]\bigg|_{h=0}
+   \frac{\partial}{\partial h} E_{\vec x}\big[  F(x_1, x_2+h g_2) \big]\bigg|_{h=0}\\
& = \frac{\partial}{\partial h} E_{x_2}\big[
   E_{x_1}\big[ F(x_1+h g_1, x_2)\big] \big]   \bigg|_{h=0}
  + \frac{\partial}{\partial h} E_{x_1} \big[
   E_{x_2}\big[ F(x_1, x_2 +h g_2)\big]  \big]\bigg|_{h=0} \\
\end{aligned}
\] 
\[
\begin{aligned}
& =\frac{\partial}{\partial h}\bigg(
   \exp\bigg\{-\frac{h^2}{2}\|g_1\|_{C_{a,b}'}^2-h(g_1,a)_{C_{a,b}'}\bigg\}\\
& \qquad\qquad\qquad\qquad\times
E_{x_2}\big[E_{x_1}\big[ F(x_1, x_2)\exp\{h(g_1,x_1)^{\sim}\}\big]\big]\bigg) 
\bigg|_{h=0}  \\
& \quad +
    \frac{\partial}{\partial h } \bigg(
       \exp\bigg\{-\frac{h^2}{2}\|g_2\|_{C_{a,b}'}^2-h(g_2,a)_{C_{a,b}'}\bigg\}\\
& \qquad\qquad\qquad\qquad\times
    E_{x_1} \big[   E_{x_2}\big[F(x_1, x_2)\exp\{h(g_2,x_2)^{\sim}\} \big] \big]
\bigg)\bigg|_{h=0}  \\
& = E_{\vec x}\big[ F(x_1, x_2)\big\{(g_1, x_1)^{\sim}+(g_2, x_2)^{\sim}\big\}\big]\\
& \qquad 
-\big\{(g_1, a)_{C_{a,b}'}+(g_2, a)_{C_{a,b}'}\big\}
E_{\vec x}\big[ F(x_1,x_2)  \big] 
\end{aligned}
\] 
as desired.
\end{proof}


\begin{lemma}\label{lemma:CS}
Let $g_1$, $g_2$,  and $F$
be as in Theorem \ref{thm:CS}.
For each $\rho_1>0$ and $\rho_2 >0$, assume that   
$F(\rho_1 x_1,\rho_2 x_2)$ is $\mu\times\mu$-integrable.
Furthermore assume that $F (\rho_1 x_1, \rho_2 x_2)$ 
has a  first variation
$\delta F(\rho_1 x_1, \rho_2 x_2|\rho_1 g_1,\rho_2 g_2)$ 
for all $(x_1,x_2) \in  C_{a,b}^2[0,T]$  
such that    for some  positive function $\gamma(\rho_1,\rho_2)$,
\[
\sup_{|h|\le \gamma(\rho_1,\rho_2)}
\big|\delta F   (\rho_1 x_1 +\rho_1 h  g_1,\rho_2 x_2 +\rho_2 h  g_2|
      \rho_1 g_1,\rho_2 g_2)\big|
\]
is   $\mu\times \mu$-integrable over $C_{a,b}^2[0,T]$
as a functional  of $(x_1,x_2)\in C_{a,b}^2[0,T]$.  
Then 
\begin{equation} \label{eq:CA-b2}
\begin{aligned}
&E_{\vec x} \big[\delta F(\rho_1 x_1,\rho_2 x_2|\rho_1 g_1,\rho_2 g_2) \big]\\
&  \quad  =
  E_{\vec x}\big[ F(\rho_1 x_1,\rho_2  x_2)  
\big\{(g_1,x_1)^{\sim}+(g_2, x_2)^{\sim}\big\} \big]\\
&\qquad 
-\big\{(g_1,a)_{C_{a,b}'}+(g_2, a)_{C_{a,b}'}\big\}
 E_{\vec x}\big[F(\rho_1 x_1,\rho_2  x_2)   \big].
\end{aligned}
\end{equation}
\end{lemma}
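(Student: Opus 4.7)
The plan is to deduce Lemma \ref{lemma:CS} as a direct consequence of Theorem \ref{thm:CS} applied to the dilated functional
\[
\widetilde F(x_1,x_2) := F(\rho_1 x_1,\rho_2 x_2),
\]
with the same direction vector $(g_1,g_2)\in C_{a,b}'[0,T]\times C_{a,b}'[0,T]$. The crucial observation is an elementary chain-rule identity for the first variation under dilation: directly from definition \eqref{eq:1st}, shifting $x_j$ by $h g_j$ inside $\widetilde F$ corresponds to shifting $\rho_j x_j$ by $\rho_j h g_j$ inside $F$, so
\[
\delta \widetilde F(x_1,x_2\mid g_1,g_2)
=\delta F(\rho_1 x_1,\rho_2 x_2\mid \rho_1 g_1,\rho_2 g_2)
\]
whenever either side exists. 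This formula will convert the hypotheses and the conclusion of Theorem \ref{thm:CS} applied to $\widetilde F$ into exactly the corresponding statements for $F$.

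The remaining step is to verify that $\widetilde F$ satisfies the two hypotheses of Theorem \ref{thm:CS}. First, $\mu\times\mu$-integrability of $\widetilde F$ on $C_{a,b}^2[0,T]$ is immediate from the assumption that $F(\rho_1 x_1,\rho_2 x_2)$ is $\mu\times\mu$-integrable. Second, translating $(x_1,x_2)\mapsto (x_1+h g_1,x_2+h g_2)$ in the chain-rule identity above gives
\[
\delta \widetilde F(x_1+h g_1,x_2+h g_2\mid g_1,g_2)
=\delta F(\rho_1 x_1+\rho_1 h g_1,\rho_2 x_2+\rho_2 h g_2\mid \rho_1 g_1,\rho_2 g_2),
\]
whose supremum over $|h|\le \gamma(\rho_1,\rho_2)$ is, by hypothesis, $\mu\times\mu$-integrable in $(x_1,x_2)$. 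Thus the uniform-bound condition of Theorem \ref{thm:CS}, with the constant $\gamma$ there chosen to be $\gamma(\rho_1,\rho_2)$, is satisfied by $\widetilde F$ in the direction $(g_1,g_2)$.

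Plugging $\widetilde F$ and $(g_1,g_2)$ into the conclusion \eqref{eq:CA-basic} of Theorem \ref{thm:CS} yields
\[
E_{\vec x}\bigl[\delta F(\rho_1 x_1,\rho_2 x_2\mid \rho_1 g_1,\rho_2 g_2)\bigr]
= E_{\vec x}\bigl[F(\rho_1 x_1,\rho_2 x_2)\{(g_1,x_1)^{\sim}+(g_2,x_2)^{\sim}\}\bigr]
\]
minus $\{(g_1,a)_{C_{a,b}'}+(g_2,a)_{C_{a,b}'}\}E_{\vec x}[F(\rho_1 x_1,\rho_2 x_2)]$, which is precisely \eqref{eq:CA-b2}. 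I do not anticipate any substantive obstacle in this argument; the only point requiring a moment of care is the bookkeeping that matches the PWZ-stochastic-integral and inner-product terms $(g_j,\cdot)^{\sim}$, $(g_j,a)_{C_{a,b}'}$ appearing on the right-hand side of \eqref{eq:CA-b2} with the unscaled direction $(g_1,g_2)$ used when invoking Theorem \ref{thm:CS} on $\widetilde F$, in contrast to the scaled direction $(\rho_1 g_1,\rho_2 g_2)$ appearing inside the first variation on the left.
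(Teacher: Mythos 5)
Your proposal is correct and follows essentially the same route as the paper: the paper likewise introduces the dilated functional $R_{(\rho_1,\rho_2)}(x_1,x_2)=F(\rho_1 x_1,\rho_2 x_2)$, establishes the identity $\delta F(\rho_1 x_1,\rho_2 x_2\mid\rho_1 g_1,\rho_2 g_2)=\delta R_{(\rho_1,\rho_2)}(x_1,x_2\mid g_1,g_2)$, and then applies equation \eqref{eq:CA-basic} to $R_{(\rho_1,\rho_2)}$. Your verification of the integrability hypotheses and the bookkeeping between the scaled direction inside the variation and the unscaled direction on the right-hand side match the paper's argument.
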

\begin{proof}
Given a pair $(\rho_1,\rho_2)$ with $\rho_1>0$ and $\rho_2>0$,
let $R_{(\rho_1,\rho_2)}(x_1,x_2)=F(\rho_1 x_1,\rho_2 x_2)$.
Then we have that
\[
R_{(\rho_1,\rho_2)}(x_1+ h g_1, x_2)= F(\rho_1 x_1 + \rho_1 h g_1, \rho_2 x_2)
\]
and 
\[
R_{(\rho_1,\rho_2)}(x_1, x_2+ h g_2)= F(\rho_1 x_1, \rho_2 x_2 + \rho_2 h g_2)
\]
and that
\[
\frac{\partial }{\partial h}R_{(\rho_1,\rho_2)}(x_1+ h g_1, x_2)\bigg|_{h=0}
= \frac{\partial }{\partial h}F(\rho_1 x_1 + \rho_1 h g_1, \rho_2 x_2)\bigg|_{h=0}
\]
and 
\[
\frac{\partial }{\partial h}R_{(\rho_1,\rho_2)}(x_1, x_2+ h g_2)\bigg|_{h=0}
= \frac{\partial }{\partial h}F(\rho_1 x_1, 
  \rho_2 x_2 + \rho_2 h g_2)\bigg|_{h=0}.
\]
Thus we have
\[
\begin{aligned}
&\delta F(\rho_1 x_1,\rho_2 x_2|\rho_1 g_1, \rho_2 g_2)\\
&= \frac{\partial }{\partial h}F(\rho_1 x_1 + \rho_1 h g_1, \rho_2 x_2)\bigg|_{h=0}
   +\frac{\partial }{\partial h}F(\rho_1 x_1  , \rho_2x_2+ \rho_2h g_2)\bigg|_{h=0}\\
&= \frac{\partial }{\partial h}R_{(\rho_1,\rho_2)}(x_1+ h w_1, x_2)\bigg|_{h=0}
   +\frac{\partial }{\partial h}R_{(\rho_1,\rho_2)}(x_1, x_2+ h g_2)\bigg|_{h=0} \\
&=\delta R_{(\rho_1,\rho_2)}(  x_1, x_2| g_1, g_2).
\end{aligned}
\]
Hence by equation \eqref{eq:CA-basic} with $F$ replaced with $R_{(\rho_1,\rho_2)}$, we have
\[
\begin{aligned}
&E_{\vec x} \big[ \delta F(\rho_1 x_1,\rho_2 x_2|\rho_1g_1,\rho_2g_2) \big]\\
&=E_{\vec x} \big[ \delta R( x_1, x_2| g_1, g_2) \big]\\
&=E_{\vec x} \big[ R(x_1, x_2)\big\{(g_1,x_1)^{\sim}+(g_2, x_2)^{\sim}\big\}\big]\\
& \qquad 
-\big\{(g_1,a)_{C_{a,b}'}+(g_2, a)_{C_{a,b}'}\big\}
 E_{\vec x} \big[ R(x_1,x_2) \big]\\
&=E_{\vec x} \big[ F(\rho_1 x_1,\rho_2  x_2)\big\{(g_1,x_1)^{\sim}+(g_2, x_2)^{\sim}\big\}\big]\\
& \qquad 
-\big\{(g_1,a)_{C_{a,b}'}+(g_2, a)_{C_{a,b}'}\big\}E_{\vec x} \big[F(\rho_1x_1,\rho_2x_2) \big]
\end{aligned}
\]
which establishes equation \eqref{eq:CA-b2}.
\end{proof}

\begin{theorem} \label{thm:CS-B2} 
Let $g_1$, $g_2$,   and $F$
be as in Lemma \ref{lemma:CS}.
Then if any two of the three generalized  analytic Feynman integrals 
in the following equation exist,
then the third one also exists, and equality holds:
\begin{equation}\label{eq:CSthm}
\begin{aligned}
&E_{\vec x}^{\mathrm{anf}_{(q_1,q_2)}}\big[\delta  F( x_1, x_2|g_1, g_2)\big]\\
&=
-iE_{\vec x}^{\mathrm{anf}_{(q_1,q_2)}} \big[F( x_1, x_2)\big\{q_1(g_1,x_1)^{\sim}
+q_2 (g_2,x_2)^{\sim}\big\}\big]\\
&\quad
-\Big\{(-iq_1)^{1/2}(g_1,a)_{C_{a,b}'}
  +(-iq_2)^{1/2}(g_2,a)_{C_{a,b}'}\Big\}
E_{\vec x}^{\mathrm{anf}_{(q_1,q_2)}}\big[F( x_1, x_2)\big].
\end{aligned}
\end{equation} 
\end{theorem}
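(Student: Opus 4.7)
The strategy is to lift the integration-by-parts identity of Lemma \ref{lemma:CS} to the Feynman setting via a two-step analytic continuation: first extending from real $\vec\lambda=(\lambda_1,\lambda_2)\in(0,\infty)^2$ to the domain $\mathbb C_+^2$ of Definition \ref{def:gfi}, and then letting $\vec\lambda\to -i\vec q$ non-tangentially inside $\mathbb C_+^2$. First I would fix $\lambda_1,\lambda_2>0$ and apply Lemma \ref{lemma:CS} with scale factors $\rho_j=\lambda_j^{-1/2}$, replacing the directions of that lemma by $\lambda_1^{1/2}g_1$ and $\lambda_2^{1/2}g_2$. The scaled directions $\rho_j(\lambda_j^{1/2}g_j)=g_j$ then match those appearing in $\delta F(\lambda_1^{-1/2}x_1,\lambda_2^{-1/2}x_2|g_1,g_2)$, so the left-hand side of \eqref{eq:CA-b2} becomes the function space integral representing $E^{\mathrm{an}_{\vec\lambda}}[\delta F(x_1,x_2|g_1,g_2)]$ for real positive $\vec\lambda$.

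Next I would unwind the right-hand side using only the $\mathbb R$-linearity of the PWZ stochastic integral and the bilinearity of $(\cdot,\cdot)_{C_{a,b}'}$: one factor of $\lambda_j^{1/2}$ is pulled out of the replaced direction in both $(g_j,a)_{C_{a,b}'}$ and $(g_j,x_j)^{\sim}$, and a second factor of $\lambda_j^{1/2}$ appears when one converts the scaled function space integral $E_{\vec x}[F(\lambda_1^{-1/2}x_1,\lambda_2^{-1/2}x_2)(g_j,x_j)^{\sim}]$ back into $\lambda_j^{1/2} E^{\mathrm{an}_{\vec\lambda}}[F(x_1,x_2)(g_j,x_j)^{\sim}]$ via Definition \ref{def:gfi}. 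Combining both factors in the PWZ terms but only the first in the $(\cdot,a)_{C_{a,b}'}$ terms produces, for all real $\vec\lambda>0$, the polynomial identity
\begin{equation*}
E^{\mathrm{an}_{\vec\lambda}}[\delta F(x_1,x_2|g_1,g_2)]=\sum_{j=1}^{2}\lambda_j\, E^{\mathrm{an}_{\vec\lambda}}[F(x_1,x_2)(g_j,x_j)^{\sim}]-\sum_{j=1}^{2}\lambda_j^{1/2}(g_j,a)_{C_{a,b}'}\,E^{\mathrm{an}_{\vec\lambda}}[F(x_1,x_2)].
\end{equation*}

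Because $\lambda_j$ and $\lambda_j^{1/2}$ are analytic on $\mathbb C_+^2$, the uniqueness of analytic continuation propagates this identity to all of $\mathbb C_+^2$ whenever two of the three $E^{\mathrm{an}_{\vec\lambda}}$-integrals appearing in it admit analytic extensions there. By Definition \ref{def:gfi}, this is guaranteed as soon as two of the three generalized analytic Feynman integrals in \eqref{eq:CSthm} exist, and the identity then forces the third extension to exist and to be analytic on $\mathbb C_+^2$. Finally, letting $\vec\lambda\to -i\vec q$ non-tangentially inside $\mathbb C_+^2$, the coefficients converge as $\lambda_j\to -iq_j$ and $\lambda_j^{1/2}\to (-iq_j)^{1/2}$, and the hypothesis that two of the Feynman limits exist forces the third to converge, yielding \eqref{eq:CSthm}. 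The main obstacle is not the analytic continuation itself but the careful bookkeeping at the first step: two independent $\lambda_j^{1/2}$ factors must combine in front of $(g_j,x_j)^{\sim}$ to produce the coefficient $\lambda_j$ (hence $-iq_j$), while only a single $\lambda_j^{1/2}$ appears in front of $(g_j,a)_{C_{a,b}'}$ (hence $(-iq_j)^{1/2}$)---precisely the asymmetry recorded in \eqref{eq:CSthm}, and the reason the rescaling $\tilde g_j=\lambda_j^{1/2}g_j$ is the correct choice in the first step.
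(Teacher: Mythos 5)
Your proposal is correct and follows essentially the same route as the paper: the paper likewise applies Lemma \ref{lemma:CS} with $\rho_j=\lambda_j^{-1/2}$ and rescaled directions $y_j=\rho_j^{-1}g_j=\lambda_j^{1/2}g_j$, extracts exactly the coefficients $\lambda_j$ on the PWZ terms and $\lambda_j^{1/2}$ on the $(g_j,a)_{C_{a,b}'}$ terms for all $\lambda_1,\lambda_2>0$, and then invokes Definition \ref{def:gfi} for the analytic continuation to $\mathbb C_+^2$ and the limit $\vec\lambda\to-i\vec q$. Your write-up just makes explicit the ``two imply the third'' continuation step that the paper leaves implicit.
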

\begin{proof}
Let $\rho_1>0$ and $\rho_2>0$ be given.
Let $y_1= \rho_1^{-1}g_1$ and $y_2= \rho_2^{-1}g_2$.
By equation \eqref{eq:CA-b2},
\begin{equation}\label{eq:CS-l}
\begin{aligned}
&E_{\vec x} \big[\delta F(\rho_1 x_1,\rho_2 x_2| g_1,  g_2) \big]\\
&=E_{\vec x} \big[ \delta F(\rho_1 x_1,\rho_2 x_2| 
  \rho_1 y_1, \rho_2 y_2) \big]\\
&= E_{\vec x} \big[ F(\rho_1 x_1,\rho_2  x_2) 
  \big\{(y_1,x_1)^{\sim}+(y_2, x_2)^{\sim}\big\}\big]\\
&\quad 
-\big\{(y_1,a)_{C_{a,b}'}+(y_2, a)_{C_{a,b}'}\big\}
   E_{\vec x} \big[ F(\rho_1 x_1,\rho_2  x_2)  \big]\\
&= E_{\vec x} \big[ F(\rho_1 x_1,\rho_2  x_2) 
\big\{\rho_1^{-2}(g_1,\rho_1x_1)^{\sim}
  +\rho_2^{-2}(g_2, \rho_2x_2)^{\sim}\big\}\big]\\
&\quad 
-\big\{\rho_1^{-1}(g_1,a)_{C_{a,b}'}+\rho_2^{-1}(g_2, a)_{C_{a,b}'}\big\}
E_{\vec x} \big[ F(\rho_1 x_1,\rho_2  x_2) \big].
\end{aligned}
\end{equation}
Now let $\rho_1=\lambda_1^{-1/2}$ and $\rho_2=\lambda_2^{-1/2}$. 
Then equation \eqref{eq:CS-l} becomes
\begin{equation}\label{eq:CS-ll}
\begin{aligned}
&E_{\vec x} \big[ \delta F(\lambda_1^{-1/2} x_1,
   \lambda_2^{-1/2}x_2| g_1,  g_2) \big]\\
&= E_{\vec x} \big[ F(\lambda_1^{-1/2} x_1,\lambda_2^{-1/2}  x_2)   \
\big\{\lambda_1 (g_1,\lambda_1^{-1/2}x_1)^{\sim}
    +\lambda_2 (g_2, \lambda_2^{-1/2}x_2)^{\sim}\big\}  \big]\\
&\quad 
-\big\{\lambda_1^{1/2}(g_1,a)_{C_{a,b}'}
    +\lambda_2^{1/2}(g_2, a)_{C_{a,b}'}\big\}
E_{\vec x} \big[ F(\lambda_1^{-1/2} x_1,\lambda_1^{-1/2}  x_2)  \big].\\
\end{aligned}
\end{equation}
Since $\rho_1>0$ and $\rho_2>0$ were arbitrary,
we have that equation \eqref{eq:CS-ll} 
holds for all $\lambda_1>0$ and $\lambda_2>0$.
We now use  Definition \ref{def:gfi} to obtain
our desired conclusion.
\end{proof}


\begin{corollary}
Let $H$ be a $\mu$-integrable functional on $C_{a,b}[0,T]$.
Let $g$ be a function in $C_{a,b}'[0,T]$.
Then if any two of the three generalized  analytic Feynman integrals  on $C_{a,b}[0,T]$
in the following equation exist,
then the third one also exists, and equality holds:
\[
\begin{aligned}
&E_{x}^{\mathrm{anf}_{q}}\big[\delta H(x|g)\big]\\
&=-iqE_{x}^{\mathrm{anf}_{q}} \big[H(x)(g,x)^{\sim}\big]
- (-iq )^{1/2}(g ,a)_{C_{a,b}'} E_{x}^{\mathrm{anf}_{q}}\big[H(x)\big],
\end{aligned}
\]
where $E_{x}^{\mathrm{anf}_{q}} [H(x) ]=\int_{C_{a,b}[0,T]}^{\mathrm{anf}_{q}}H(x)d\mu(x)$ 
means the generalized analytic Feynman integral of functionals $H$ on
$C_{a,b}[0,T]$, see  \cite{CC16,CCK15,CCS03,CCS10,CS03,CC12}.
\end{corollary}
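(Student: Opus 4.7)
The plan is to deduce this corollary as a special case of Theorem~\ref{thm:CS-B2} by lifting the one-variable setup to $C_{a,b}^2[0,T]$. I would define $F(x_1,x_2):=H(x_1)$, set $g_1:=g$ and $q_1:=q$, and pick any nonzero auxiliary function $g_2\in C_{a,b}'[0,T]$ together with any nonzero real $q_2$; the choice of $(g_2,q_2)$ will turn out to be irrelevant because the $g_2$-dependent contributions will cancel algebraically.

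Since $F$ depends only on $x_1$ and $\mu$ is a probability measure, Fubini reduces every $C_{a,b}^2[0,T]$-integral of $F$ (and of its scaled versions) to the corresponding $C_{a,b}[0,T]$-integral of $H$, so the integrability hypotheses of Lemma~\ref{lemma:CS} transfer automatically from those available for $H$ once the relevant one-variable Feynman integrals exist. Moreover, the first variation decouples: $\delta F(x_1,x_2|g_1,g_2)=\delta H(x_1|g)$, and the same is true after the scaling that appears in the hypotheses of Lemma~\ref{lemma:CS}.

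I would then plug $F$ into~\eqref{eq:CSthm}. By Fubini, $E_{\vec x}^{\mathrm{anf}_{(q,q_2)}}[F(x_1,x_2)]=E_x^{\mathrm{anf}_q}[H(x)]$ and $E_{\vec x}^{\mathrm{anf}_{(q,q_2)}}[\delta F(x_1,x_2|g_1,g_2)]=E_x^{\mathrm{anf}_q}[\delta H(x|g)]$, while the $x_1$-PWZ term reduces to $E_x^{\mathrm{anf}_q}[H(x)(g,x)^{\sim}]$ and the $x_2$-PWZ term factors as
\[
E_{\vec x}^{\mathrm{anf}_{(q,q_2)}}\bigl[H(x_1)(g_2,x_2)^{\sim}\bigr]=E_x^{\mathrm{anf}_q}[H(x)]\cdot E_{x_2}^{\mathrm{anf}_{q_2}}\bigl[(g_2,x_2)^{\sim}\bigr].
\]
A direct computation using linearity of the PWZ integral together with the translation theorem (Lemma~\ref{thm:tt}) gives $E_{x_2}[(g_2,\lambda_2^{-1/2}x_2)^{\sim}]=\lambda_2^{-1/2}(g_2,a)_{C_{a,b}'}$ for $\lambda_2>0$, and this analytically continues to $(-iq_2)^{-1/2}(g_2,a)_{C_{a,b}'}$.

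Substituting these values into~\eqref{eq:CSthm}, the coefficient of $(g_2,a)_{C_{a,b}'}E_x^{\mathrm{anf}_q}[H(x)]$ on the right-hand side becomes $-iq_2\cdot(-iq_2)^{-1/2}-(-iq_2)^{1/2}=0$, so every occurrence of $g_2$ disappears and what remains is exactly the asserted identity. The ``any two of three'' clause transfers automatically, since the $g_2$-dependent pieces in the two-variable identity are explicit and determined once $E_x^{\mathrm{anf}_q}[H(x)]$ is known. I expect the only point requiring care is the consistent use of the principal branch in evaluating $(-iq_2)^{-1/2}$, but this is just bookkeeping under the convention $\lambda^{-1/2}=(\lambda^{-1})^{1/2}$ adopted in the paper.
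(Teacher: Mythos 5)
Your approach is exactly the paper's: the entire published proof is ``Simply choose $F(x_1,x_2)=H(x_1)$,'' and your evaluation $E_{x_2}^{\mathrm{anf}_{q_2}}\big[(g_2,x_2)^{\sim}\big]=(-iq_2)^{-1/2}(g_2,a)_{C_{a,b}'}$ together with the cancellation $-iq_2\cdot(-iq_2)^{-1/2}-(-iq_2)^{1/2}=0$ correctly supplies the bookkeeping for the auxiliary direction $(g_2,q_2)$ that the paper suppresses entirely; keeping $g_2$ nonzero also respects the hypothesis of Theorem~\ref{thm:CS}, which the naive choice $g_2=0$ would violate. So for the identity itself your argument is sound and, if anything, more complete than the paper's.

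There is, however, one direction of the ``any two of three'' claim that your argument does not close: the case where $E_{x}^{\mathrm{anf}_q}[\delta H(x|g)]$ and $E_{x}^{\mathrm{anf}_q}[H(x)(g,x)^{\sim}]$ are the two integrals assumed to exist and $E_{x}^{\mathrm{anf}_q}[H(x)]$ is to be deduced. Your reduction of the middle two-variable integral in \eqref{eq:CSthm} relies on the factorization $E_{\vec x}^{\mathrm{anf}_{(q,q_2)}}\big[H(x_1)(g_2,x_2)^{\sim}\big]=E_{x}^{\mathrm{anf}_q}[H(x)]\cdot(-iq_2)^{-1/2}(g_2,a)_{C_{a,b}'}$, which presupposes the existence of $E_{x}^{\mathrm{anf}_q}[H(x)]$ --- precisely the quantity in question --- so in this case you can only certify that one of the three two-variable integrals exists and Theorem~\ref{thm:CS-B2} cannot be invoked; the claim that the transfer is ``automatic'' is circular here. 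The repair is to perform your cancellation one step earlier, before analytic continuation: integrating out $x_2$ in \eqref{eq:CS-ll} for real $\lambda_1,\lambda_2>0$, where $E_{x_2}\big[(g_2,\lambda_2^{-1/2}x_2)^{\sim}\big]=\lambda_2^{-1/2}(g_2,a)_{C_{a,b}'}$ is elementary, yields the one-variable identity $E_{x}\big[\delta H(\lambda^{-1/2}x|g)\big]=\lambda E_{x}\big[H(\lambda^{-1/2}x)(g,\lambda^{-1/2}x)^{\sim}\big]-\lambda^{1/2}(g,a)_{C_{a,b}'}E_{x}\big[H(\lambda^{-1/2}x)\big]$ for all $\lambda>0$, after which the existence-transfer argument of Definition~\ref{def:gfi} applies directly in one variable (with the usual caveat, present already in the two-variable theorem, that recovering the third integral in this particular direction requires $(g,a)_{C_{a,b}'}\neq 0$).
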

\begin{proof}
Simply choose $F(x_1,x_2)= H(x_1)$.
\end{proof}

\setcounter{equation}{0}
\section{Functionals in  the  generalized Fresnel type class
$\mathcal{F}_{A_1,A_2}^{\,\,a,b}$}\label{sec:fab}

\par
In this section we introduce  the generalized Fresnel type class 
$\mathcal{F}_{A_1,A_2}^{\,\,a,b}$   to which we apply our  Cameron--Storvick type 
theorem. 

Let $\mathcal{M}(C_{a,b}'[0,T])$ be the space 
of  complex-valued, countably additive (and hence finite)  
Borel measures on $C_{a,b}'[0,T]$. The space 
$\mathcal{M}(C_{a,b}'[0,T])$ is a Banach algebra 
under the total variation norm and with 
convolution as multiplication, see \cite{Cohn,Rudin}.

\begin{definition}\label{def:class}
Let $A_1$ and $A_2$ be bounded, nonnegative self-adjoint operators 
on $C_{a,b}'[0,T]$.
The generalized Fresnel type class  $\mathcal{F}_{A_1,A_2}^{\,\,a,b}$ 
of functionals on $C_{a,b}^2[0,T]$ 
is defined as the space of all functionals $F$ 
on $C_{a,b}^2[0,T]$ of the form
\begin{equation}\label{eq:element2}
F(x_1,x_2)=\int_{C_{a,b}'[0,T]} 
  \exp\bigg\{\sum\limits_{j=1}^2i(A_j^{1/2}w,x_j)^{\sim}\bigg\} d f (w)      
\end{equation}
for s-a.e. $(x_1,x_2)\in C_{a,b}^2[0,T]$, where  $f$ is in $\mathcal M(C_{a,b}'[0,T])$. 
More precisely, since we identify functionals 
which coincide s-a.e. on $C_{a,b}^2[0,T]$,
$\mathcal{F}_{A_1,A_2}^{\,\,a,b}$ can be regarded as 
the  space of all s-equivalence classes 
of functionals of the form \eqref{eq:element2}.
For more details, see \cite{CCS19,CSC13}.
\end{definition}


\begin{remark}\label{re:A}
{\rm (1)} 
Note that in case $a(t)\equiv 0$ and $b(t)=t$ on $[0,T]$,
the function space $C_{a,b}[0,T]$ reduces   to 
the classical Wiener space $C_0[0,T]$. 
In this case, the generalized Fresnel type   class $\mathcal{F}_{A_1,A_2}^{\,\,a,b}$ 
reduces   to  the Kallianpur and  Bromley  Fresnel 
class $\mathcal F_{A_1,A_2}$,  see \cite{KB84}.

{\rm (2)}  In addition, if we choose 
$A_1=I$ (identity operator)  and $A_2=0$ (zero operator),
then   the  class $\mathcal{F}_{A_1,A_2}^{\,\,a,b}$
reduces to the Fresnel class $\mathcal{F}(C_0[0,T])$.
 It is known, see \cite{Johnson82}, that  $\mathcal{F}(C_0[0,T])$ 
forms a Banach algebra over the complex field.

{\rm (3)}  
The map $f \mapsto F$ defined by \eqref{eq:element2} 
sets up an algebra isomorphism 
between $\mathcal M(C_{a,b}'[0,T])$ and  $\mathcal F_{A_1,A_2}^{\,\,a,b}$ 
if $\mathrm{Ran}(A_1+A_2)$ is dense in $C_{a,b}'[0,T]$  
where $\mathrm{Ran}$  indicates the range of an operator.
In this case, $\mathcal{F}_{A_1,A_2}^{\,\,a,b}$ becomes a Banach algebra 
under the norm $\|F\|=\|f\|$. 
For more details,   see  \cite{KB84}. 
\end{remark}

As discussed in \cite[Remark 7]{CSC13}, for a functional $F$ in $\mathcal F_{A_1,A_2}^{\,\,a,b}$ 
and a vector $\vec q=(q_1,q_2)$ with $q_1\ne 0$ and $q_2\ne0$, the generalized analytic Feynman integral 
$E^{\mathrm{ anf}_{\vec q}}[F]$ might not exist. By a simple modification of the example
illustrated in \cite{CC16}, we can construct an example for the functional whose 
generalized analytic Feynman integral  $E^{\mathrm{ anf}_{\vec q}}[F]$ does not exist.
Thus we need to impose additional restrictions on the functionals $F$ in  $\mathcal F_{A_1,A_2}^{\,\,a,b}$.

Given a positive real number $q_0>0$, and 
bounded, nonnegative self-adjoint operators  $A_1$ and $A_2$ on $C_{a,b}'[0,T]$,
let
\begin{equation}\label{eq:Domi-1}
\begin{aligned}
k(q_0;\vec A;w)
&\equiv  k(q_0;A_1,A_2;w)\\
&=\exp\bigg\{  \sum_{j=1}^2(2q_0)^{-1/2} 
 \|A_j^{1/2}\|_{o}\|w\|_{C_{a,b}'}\|a\|_{C_{a,b}'}   \bigg\}
\end{aligned}
\end{equation}
where $\|A_j^{1/2}\|_o$ means the operator norm of $A_j^{1/2}$ 
for $j\in\{1,2\}$.
For the existence of the  generalized analytic Feynman integral  of $F$,
we  define a subclass $\mathcal{F}_{A_1,A_2}^{\, \,q_0}$
of $\mathcal{F}_{A_1,A_2}^{\,\,a,b}$ by
$F\in \mathcal{F}_{A_1,A_2}^{\,\, q_0}$  if and only if 
\[
\int_{C_{a,b}'[0,T]}k(q_0;\vec A;w)d|f|(w)<+\infty   
\]
where $f$ and $F$ are  related by equation \eqref{eq:element2}
and $k$ is given by equation \eqref{eq:Domi-1}.

The following theorem is due to Choi, Skoug and Chang \cite{CSC13}.


\begin{theorem}\label{thm:t1q}
Let $q_0$ be a positive real number  and 
let $F$ be an element of $\mathcal F_{A_1, A_2}^{\,\,q_0}$.
Then for all real numbers $q_1$ and $q_2$ 
with $|q_j|> q_{0}$, $j\in\{1,2\}$,
the generalized analytic Feynman integral 
$E^{\mathrm{anf}_{\vec q}} [F]$ 
of $F$ exists and is given by the formula
\begin{equation}\label{eq:Fab-feynman}
E^{\mathrm{anf}_{\vec q}} [F]  
= \int_{C_{a,b}'[0,T]}\psi(-i\vec q;\vec A;w)df (w),
\end{equation}
where $\psi(-i\vec q;\vec A;w)$ is given by
\begin{equation}\label{eq:notation-psi}
\psi(-i\vec q;\vec A;w)
=\exp\bigg\{\sum_{j=1}^2\bigg[-\frac{i(A_jw,w)_{C_{a,b}'}}{2q_j} 
+i(-iq_j)^{-1/2}(A_j^{1/2}w,a)_{C_{a,b}'}\bigg]\bigg\}.
\end{equation} 
\end{theorem}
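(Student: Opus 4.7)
The plan is to evaluate the ordinary function space integral $J(\lambda_1,\lambda_2)$ for $\lambda_j>0$ by a direct Gaussian computation, then analytically continue to a subregion of $\mathbb C_+^2$ tailored to the integrability hypothesis in the definition of $\mathcal F_{A_1,A_2}^{\,\,q_0}$, and finally take the boundary limit $\vec\lambda\to-i\vec q$ by dominated convergence.

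First I would fix $\lambda_1,\lambda_2>0$, insert the representation \eqref{eq:element2} of $F$ into the definition of $J(\lambda_1,\lambda_2)$, and apply Fubini's theorem; this is justified because for positive real $\lambda_j$ the integrand $\exp\{i\sum_{j=1}^{2}(A_j^{1/2}w,\lambda_j^{-1/2}x_j)^{\sim}\}$ has modulus one and $f$ is a finite complex Borel measure. The inner expectation factors across $j$ thanks to the product structure of $\mu\times\mu$, and under $\mu$ the PWZ integral $(A_j^{1/2}w,x_j)^{\sim}$ is Gaussian with mean $(A_j^{1/2}w,a)_{C_{a,b}'}$ and variance $(A_jw,w)_{C_{a,b}'}$ (using the self-adjointness of $A_j$). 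The Gaussian characteristic function identity then yields
\[
E_{x_j}\big[\exp\{i\lambda_j^{-1/2}(A_j^{1/2}w,x_j)^{\sim}\}\big]=\exp\Big\{-\frac{(A_jw,w)_{C_{a,b}'}}{2\lambda_j}+i\lambda_j^{-1/2}(A_j^{1/2}w,a)_{C_{a,b}'}\Big\},
\]
and if $\psi(\vec\lambda;\vec A;w)$ denotes the natural extension of \eqref{eq:notation-psi} obtained by replacing $-i\vec q$ by $\vec\lambda$, multiplying over $j=1,2$ gives $J(\lambda_1,\lambda_2)=\int_{C_{a,b}'[0,T]}\psi(\vec\lambda;\vec A;w)\,df(w)$ for all $\lambda_1,\lambda_2>0$.

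Second I would promote this identity to complex $\vec\lambda$. For each fixed $w$ the map $\vec\lambda\mapsto\psi(\vec\lambda;\vec A;w)$ is analytic on $\mathbb C_+^2$ because $\lambda_j^{-1/2}$ and $\lambda_j^{-1}$ are analytic on their principal branch there. For a dominating function I would use two facts: $\mathrm{Re}(\lambda_j^{-1})>0$ on $\mathbb C_+$ makes the first real factor's modulus at most $1$, and the elementary geometric bound $|\mathrm{Im}(\lambda_j^{-1/2})|\le|\lambda_j|^{-1/2}\sin(\pi/4)=(2|\lambda_j|)^{-1/2}$ controls the oscillatory factor. Consequently, on the open region $U_{q_0}=\{\vec\lambda\in\mathbb C_+^2:|\lambda_j|>q_0,\ j=1,2\}$, which contains both the ray $\{\lambda_j>q_0\}$ and a full $\mathbb C_+^2$-neighborhood of every admissible target $-i\vec q$, one obtains the pointwise bound $|\psi(\vec\lambda;\vec A;w)|\le k(q_0;\vec A;w)$. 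Since $\int k(q_0;\vec A;w)\,d|f|(w)<\infty$ by the hypothesis $F\in\mathcal F_{A_1,A_2}^{\,\,q_0}$, Morera's theorem combined with Fubini shows that $\int\psi(\vec\lambda;\vec A;w)\,df(w)$ is analytic on $U_{q_0}$, and uniqueness of analytic continuation from $\lambda_j>q_0$ identifies it with $E^{\mathrm{an}_{\vec\lambda}}[F]$ there.

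Third, the same dominating function $k(q_0;\vec A;w)$ controls the integrand on a neighborhood of $-i\vec q$ inside $U_{q_0}$, while continuity of $\psi(\cdot;\vec A;w)$ up to the boundary point $-i\vec q$ provides pointwise convergence of the integrand. Lebesgue dominated convergence then yields formula \eqref{eq:Fab-feynman}. The only step beyond routine bookkeeping is the geometric bound $|\mathrm{Im}(\lambda_j^{-1/2})|\le(2q_0)^{-1/2}$ on $U_{q_0}$, which is precisely the ingredient that converts the integrability condition defining $\mathcal F_{A_1,A_2}^{\,\,q_0}$ into convergence of both the analytic extension and the Feynman limit.
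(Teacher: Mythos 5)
The paper itself offers no proof of this theorem (it is quoted from \cite{CSC13}), so your argument can only be measured against the definitions the paper does state and against the evident design of the class $\mathcal F_{A_1,A_2}^{\,\,q_0}$. Your overall route --- Gaussian evaluation of $J(\lambda_1,\lambda_2)$ for positive $\vec\lambda$ via the characteristic function of the PWZ integral, analytic extension of the resulting $w$-integral, and dominated convergence as $\vec\lambda\to-i\vec q$ --- is exactly the argument this class is built for, and your identification of $k(q_0;\vec A;w)$ as the dominating function for the boundary limit via $|\mathrm{Im}(\lambda_j^{-1/2})|\le(2|\lambda_j|)^{-1/2}$ is the intended use of the defining integrability condition. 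Steps one and three are sound.

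There is, however, a genuine gap in step two. Definition \ref{def:gfi} requires, before $E^{\mathrm{anf}_{\vec q}}[F]$ can even be discussed, a function $J^*$ analytic on \emph{all} of $\mathbb C_+^2$ agreeing with $J$ on all of $(0,\infty)^2$; you establish analyticity of $\int\psi(\vec\lambda;\vec A;w)\,df(w)$ only on $U_{q_0}=\{\vec\lambda\in\mathbb C_+^2:|\lambda_j|>q_0\}$, and your phrase ``identifies it with $E^{\mathrm{an}_{\vec\lambda}}[F]$ there'' presupposes that $E^{\mathrm{an}_{\vec\lambda}}[F]$ exists, which is precisely what has not yet been shown; the argument is circular as written. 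Your bound $|\psi|\le k(q_0;\vec A;w)$ degenerates as $|\lambda_j|\to 0$ and cannot close this. The repair is to apply the Cauchy--Schwarz inequality before discarding the quadratic factor: writing $s_j=(A_jw,w)_{C_{a,b}'}^{1/2}$, one has $|(A_j^{1/2}w,a)_{C_{a,b}'}|\le s_j\|a\|_{C_{a,b}'}$, hence
\[
|\psi(\vec\lambda;\vec A;w)|
\le\prod_{j=1}^2\exp\Big\{-\tfrac12\mathrm{Re}(\lambda_j^{-1})s_j^2
+\big|\mathrm{Im}(\lambda_j^{-1/2})\big|\,\|a\|_{C_{a,b}'}\,s_j\Big\}
\le\prod_{j=1}^2\exp\bigg\{\frac{\big(\mathrm{Im}(\lambda_j^{-1/2})\big)^2\,\|a\|_{C_{a,b}'}^2}{2\,\mathrm{Re}(\lambda_j^{-1})}\bigg\},
\]
a bound independent of $w$ and locally bounded on $\mathbb C_+^2$. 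Since $f$ is a finite measure, $\int\psi\,df$ therefore converges absolutely and is analytic on all of $\mathbb C_+^2$ with no appeal to the $q_0$ condition; that condition is needed only where you use it in step three, namely at the boundary, where $\mathrm{Re}(\lambda_j^{-1})\to0$ and the Gaussian damping disappears. With this one repair your proof is complete.
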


\par
For $j\in\{1,2\}$, let $g_j\in C_{a,b}'[0,T]$
and let $F$ be an element of $\mathcal{F}_{A_1,A_2}^{\,\,a,b}$
whose associated measure $f$, see equation \eqref{eq:element2}, 
satisfies the inequality
\begin{equation}\label{eq:finite102}
\int_{C_{a,b}'[0,T]}\|w\|_{C_{a,b}'} d|f|(w)<+\infty.
\end{equation}
Then using equation \eqref{eq:1st}, 
we obtain that 
\begin{equation}\label{eq:evaluation-delta}
\begin{aligned}
&\delta F(x_1,x_2|g_1,g_2)\\
&=\sum_{k=1}^2\bigg[\frac{\partial}{\partial h}\bigg(
 \int_{C_{a,b}'[0,T]}\exp\bigg\{\sum_{j=1}^2i(A_j^{1/2}w,x_j)^{\sim}\\
&\qquad\qquad\qquad\qquad\qquad\qquad\qquad
+ ih(A_k^{1/2}w,g_k)^{\sim}\bigg\} d f(w)\bigg)\bigg|_{h=0}\bigg]\\ 
&= \int_{C_{a,b}'[0,T]}\Big[\sum_{k=1}^2i(A_k^{1/2}w,g_k)_{C_{a,b}'}\Big]
\exp\bigg\{\sum_{j=1}^2i(A_j^{1/2}w,x_j)^{\sim}\bigg\} df(w)\\
&= \int_{C_{a,b}'[0,T]}\exp\bigg\{\sum_{j=1}^2i(A_j^{1/2}w,x_j)^{\sim}\bigg\} 
d \sigma^{\vec A,\vec g}(w)
\end{aligned}
\end{equation}
where the complex measure $\sigma^{\vec A,\vec g}$ 
is defined by
\[
\sigma^{\vec A,\vec g}(B)
= \int_{B}\Big[\sum_{k=1}^2 i(A_{k}^{1/2}w,g_k)_{C_{a,b}'} \Big]df(w), 
 \quad B \in \mathcal{B}(C_{a,b}'[0,T]).
\]
The second equality of \eqref{eq:evaluation-delta}
follows from \eqref{eq:finite102} 
and Theorem 2.27 in \cite{Folland}.
Also, $\delta F(x_1,x_2|g_1,g_2)$ is an element 
of $\mathcal{F}_{A_1,A_2}^{\,\,a,b}$
as a functional of $(x_1,x_2)$,
since by the Cauchy--Schwartz inequality and \eqref{eq:finite102},
\[
\begin{aligned}
\|\sigma^{\vec A,\vec g}\|
&\le \int_{C_{a,b}'[0,T]}\sum_{j=1}^2|i(A_j^{1/2}w,g_j)_{C_{a,b}'}|d|f|(w)\\
&\le \int_{C_{a,b}'[0,T]}\sum_{j=1}^2
   \|A_j^{1/2}\|_{o}\|w\|_{C_{a,b}'}\|g_j\|_{C_{a,b}'}d|f|(w)\\
&\le\bigg(\sum_{j=1}^2\|A_j^{1/2}\|_{o}\|g_j\|_{C_{a,b}'}\bigg)
   \int_{C_{a,b}'[0,T]} \|w\|_{C_{a,b}'} d|f|(w) <+\infty,
\end{aligned}
\]
where $\|A_j^{1/2}\|_o$ is the operator norm of $A_j^{1/2}$.

For the existence of the  generalized analytic Feynman integral  of the first variation $\delta F$
of a functional $F$ in $\mathcal{F}_{A_1,A_2}^{\,\,a,b}$,
we also define a subclass of $\mathcal{F}_{A_1,A_2}^{\, \,a,b}$ as follows: 
given a positive real number $q_0$, 
we  define a subclass $\mathcal{G}_{A_1,A_2}^{\, \,q_0}$ 
of  $\mathcal{F}_{A_1,A_2}^{\,\,a,b}$ by
$F\in \mathcal{G}_{A_1,A_2}^{\,\, q_0}$  
if and only if 
\[
\int_{C_{a,b}'[0,T]}\|w\|_{C_{a,b}'}
k(q_0;\vec A;w)d|f|(w)<+\infty   
\]
where $f$ is the associated measure of $F$  by equation \eqref{eq:element2}
and $k(q_0; \vec A; w)$ is given by equation \eqref{eq:Domi-1}.

\par
Our next theorem  follows quite readily 
from  the techniques developed in the proof of Theorem 9 of \cite{CSC13}.


\begin{theorem}\label{coro:t1q-feynman}
Let $q_{0}$ be a positive real number and 
let $g_1$ and $g_2$ be functions in $C_{a,b}'[0,T]$.
Let $F$ be an element of $\mathcal{G}_{A_1,A_2}^{\, \,q_0}$.
Then for all real numbers $q_1$ and $q_2$ with $|q_j|> q_{0}$, $j\in\{1,2\}$,
the generalized analytic Feynman integral   
of $\delta F( \cdot ,\cdot |g_1,g_2)$ exists 
and  is given by the formula
\begin{equation}\label{eq:gfi-delta}
E_{\vec x}^{\mathrm{anf}_{\vec q}} [\delta F(x_1,x_2|g_1,g_2)]
= \int_{C_{a,b}'[0,T]}\Big[\sum_{j=1}^2i(A_{j}^{1/2}w,g_j)_{C_{a,b}'}\Big]
df_{\vec q}^{\vec A} (w),
\end{equation}
where $f_{\vec q}^{\vec A}$ is a complex measure on $\mathcal B(C_{a,b}'[0,T])$,
the Borel $\sigma$-algebra of $C_{a,b}'[0,T]$,
given by 
\begin{equation}\label{eq:h-function}
f_{\vec q}^{\vec A}(B)
=\int_{B}\psi(-i\vec q;\vec A;w)d f(w),
   \qquad B\in \mathcal{B}(C_{a,b}'[0,T])
\end{equation}
and where $\psi(-i\vec q;\vec A;w)$ is given by \eqref{eq:notation-psi}.
\end{theorem}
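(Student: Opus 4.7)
The plan is to apply Theorem \ref{thm:t1q} directly to $\delta F(\cdot,\cdot\,|g_1,g_2)$, viewed as a functional of $(x_1,x_2)$ in the generalized Fresnel type class on $C_{a,b}^2[0,T]$. The calculation preceding the statement already shows that
\[
\delta F(x_1,x_2|g_1,g_2)
= \int_{C_{a,b}'[0,T]} \exp\bigg\{\sum_{j=1}^2 i(A_j^{1/2}w,x_j)^{\sim}\bigg\}\, d\sigma^{\vec A,\vec g}(w),
\]
so $\delta F\in \mathcal{F}_{A_1,A_2}^{\,\,a,b}$ with associated complex Borel measure $\sigma^{\vec A,\vec g}$. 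The main issue is then to certify that $\delta F$ actually lies in the smaller subclass $\mathcal{F}_{A_1,A_2}^{\,\,q_0}$, which is exactly the hypothesis needed to invoke Theorem \ref{thm:t1q}.

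To that end, I would first estimate the total variation of $\sigma^{\vec A,\vec g}$. Using the Cauchy--Schwarz inequality together with the operator bound $|(A_k^{1/2}w,g_k)_{C_{a,b}'}| \le \|A_k^{1/2}\|_o \|w\|_{C_{a,b}'}\|g_k\|_{C_{a,b}'}$ (as already used in the paragraph preceding the theorem), one obtains
\[
d|\sigma^{\vec A,\vec g}|(w)
\le \bigg(\sum_{k=1}^2 \|A_k^{1/2}\|_o\|g_k\|_{C_{a,b}'}\bigg) \|w\|_{C_{a,b}'}\, d|f|(w).
\]
Multiplying by $k(q_0;\vec A;w)$ and integrating, the hypothesis $F\in\mathcal{G}_{A_1,A_2}^{\,\,q_0}$ gives
\[
\int_{C_{a,b}'[0,T]} k(q_0;\vec A;w)\, d|\sigma^{\vec A,\vec g}|(w)
\le \bigg(\sum_{k=1}^2 \|A_k^{1/2}\|_o\|g_k\|_{C_{a,b}'}\bigg) \int_{C_{a,b}'[0,T]} \|w\|_{C_{a,b}'}\, k(q_0;\vec A;w)\, d|f|(w) < +\infty,
\]
so $\delta F\in \mathcal{F}_{A_1,A_2}^{\,\,q_0}$.

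With this verification in hand, I apply Theorem \ref{thm:t1q} to $\delta F$ (with $\sigma^{\vec A,\vec g}$ playing the role of the associated measure) to conclude that $E_{\vec x}^{\mathrm{anf}_{\vec q}}[\delta F(x_1,x_2|g_1,g_2)]$ exists for every $\vec q=(q_1,q_2)$ with $|q_j|>q_0$, and equals
\[
\int_{C_{a,b}'[0,T]} \psi(-i\vec q;\vec A;w)\, d\sigma^{\vec A,\vec g}(w).
\]
The last step is a bookkeeping substitution: insert the explicit density $d\sigma^{\vec A,\vec g}(w) = \bigl[\sum_{k=1}^2 i(A_k^{1/2}w,g_k)_{C_{a,b}'}\bigr] df(w)$ and rewrite $\psi(-i\vec q;\vec A;w)\, df(w) = df_{\vec q}^{\vec A}(w)$ using the definition \eqref{eq:h-function}, which yields the formula \eqref{eq:gfi-delta}.

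The only genuinely substantive step is the integrability estimate on $\sigma^{\vec A,\vec g}$; everything else is a direct invocation of Theorem \ref{thm:t1q} applied to the explicit integral representation of $\delta F$ already derived in the text. I therefore expect the proof to be short, essentially reducing the theorem to an application of the existence result for Feynman integrals of Fresnel type functionals, combined with the Cauchy--Schwarz bound that converts the $\mathcal{G}_{A_1,A_2}^{\,\,q_0}$ condition on $F$ into the $\mathcal{F}_{A_1,A_2}^{\,\,q_0}$ condition on $\delta F$.
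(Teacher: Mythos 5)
Your proposal is correct and follows essentially the route the paper intends: the paper only sketches the proof by citing the techniques of Theorem 9 of \cite{CSC13}, but its preceding derivation of the representation $\delta F(x_1,x_2|g_1,g_2)=\int \exp\{\sum_j i(A_j^{1/2}w,x_j)^{\sim}\}\,d\sigma^{\vec A,\vec g}(w)$ and its introduction of the class $\mathcal{G}_{A_1,A_2}^{\,\,q_0}$ exist precisely so that one can verify $\delta F\in\mathcal{F}_{A_1,A_2}^{\,\,q_0}$ and invoke Theorem \ref{thm:t1q}, exactly as you do. Your Cauchy--Schwarz estimate converting the $\mathcal{G}_{A_1,A_2}^{\,\,q_0}$ hypothesis on $f$ into the $\mathcal{F}_{A_1,A_2}^{\,\,q_0}$ condition on $\sigma^{\vec A,\vec g}$ is the right (and only substantive) step.
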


\setcounter{equation}{0}
\section{Applications of the Cameron--Storvick type theorem}\label{sec:last}

\par
Let $A$ be a bounded self-adjoint operator on $C_{a,b}'[0,T]$.
Then we can write 
\begin{equation}\label{eq:decomposition}
A=A_+-A_-
\end{equation}
where $A_+$ and $A_-$ are each bounded, nonnegative and self-adjoint.
Take $A_1=A_+$ and $A_2=A_-$ in the definition of 
$\mathcal{F}_{A_1,A_2}^{\,\,a,b}$ above.  
In this section we consider functionals 
in  the generalized Fresnel type class 
$\mathcal{F}_{A}^{\,\,a,b}\equiv \mathcal{F}_{A_+,A_-}^{\,\,a,b}$
where $A$, $A_+$ and $A_-$ are related by 
equation \eqref{eq:decomposition} above.

\par
Let $C_{a,b}^*[0,T]$ be the set of  functions $k$ in $C_{a,b}'[0,T]$ such that $Dk$
is continuous except for a finite number of finite jump discontinuities and is of
bounded variation on $[0,T]$. For any $w\in C_{a,b}'[0,T]$ and $k\in C_{a,b}^*[0,T]$,
let the operation $\odot$ between $C_{a,b}'[0,T]$ and $C_{a,b}^*[0,T]$ be defined by
\begin{equation}\label{def-odot}
w\odot k =D^{-1}(DwDk)    
\end{equation} 
so that $D(w\odot k)=DwDk$, where $DwDk$ denotes the pointwise multiplication of 
the functions $Dw$ and $Dk$.  In equation  \eqref{def-odot}, the operator $D^{-1}$
is given by \eqref{eq:D-inverse} above. Then
$(C_{a,b}^*[0,T],\odot)$ is a commutative algebra  with the identity $b$.
Also we can observe that
for any $w, w_1, w_2\in C_{a,b}'[0,T]$ and $k\in C_{a,b}^*[0,T]$,
\begin{equation}\label{odot-commu}
 w  \odot k =   k \odot w 
\end{equation}
and
\begin{equation}\label{odot-prop}
(w_1, w_2\odot k )_{C_{a,b}'} =(w_1 \odot k, w_2)_{C_{a,b}'}.
\end{equation}
For a more detailed study of the class $C_{a,b}^*[0,T]$, see \cite{CCK15}.

\par
Next let $\vartheta$ be a function in  $C_{a,b}^*[0,T]$ with $D\vartheta=\theta$.
Define an operator $A:C_{a,b}'[0,T]\to C_{a,b}'[0,T]$ by
\begin{equation}\label{eq:final-op}
\begin{aligned}
A w(t)
&=(\vartheta\odot w)(t)
 =\int_0^t D\vartheta(s)D w(s)d b(s)\\
&=\int_0^t \theta(s)\frac{w'(s)}{b'(s)} d b(s) 
 =\int_0^t \theta(s) d w(s).
\end{aligned}
\end{equation}
It is easily shown that  $A$ is a  self-adjoint operator.
We also see that $A=A_+-A_-$ where
\[
A_+w(t)
=\int_0^t \theta^{+}(s)D w(s)d b(s)=\int_0^t \theta^{+}(s) d w(s)
\]
and
\[
A_-w(t)
=\int_0^t \theta^{-}(s)Dw(s)d b(s)=\int_0^t \theta^{-}(s) d w(s)
\]
and where $\theta^+$ and $\theta^-$ are the positive part 
and the negative part of $\theta$, respectively.
Also, $A_+^{1/2}$ and $A_-^{1/2}$ are given by
\[
A_+^{1/2}w(t)
=\int_0^t \sqrt{\theta^{+}}(s)dw(s)
\,\,\hbox{ and }\,\,
A_-^{1/2}w(t)
=\int_0^t \sqrt{\theta^{-}}(s)dw(s),
\]
respectively. 
For a more   detailed study of this decomposition, 
see \cite[pp.187--189]{JL00}.
For notational convenience, let
$\vartheta_+^{1/2} =D^{-1}\sqrt{\theta^+}$
and let 
$\vartheta_-^{1/2} =D^{-1}\sqrt{\theta^-}$.
Then  it follows that
\begin{equation}\label{A+pn}
A_+^{1/2}w(t)
=(\vartheta_{+}^{1/2}\odot w)(t)
\,\,\hbox{ and }\,\,
A_-^{1/2}w(t)
=(\vartheta_{-}^{1/2}\odot w)(t),
\end{equation}
respectively.

\par
For fixed $g\in C_{a,b}'[0,T]$, 
let $g_1=A_+^{1/2}g$ and $g_2=A_-^{1/2}(-g)$.
Then we see that for any functions  $w$ in $C_{a,b}'[0,T]$,
\begin{equation}\label{eq:q001}
\begin{aligned}
&(A_+^{1/2}w, g_1)_{C_{a,b}'}+(A_-^{1/2}w, g_2)_{C_{a,b}'}\\
&=(A_+^{1/2}w, A_+^{1/2}g)_{C_{a,b}'}
 -(A_-^{1/2}w,A_-^{1/2} g)_{C_{a,b}'}\\
&=(A_+w, g)_{C_{a,b}'}-(A_- w, g)_{C_{a,b}'}\\
&=(Aw,g)_{C_{a,b}'}.
\end{aligned}
\end{equation}
Using equations \eqref{A+pn}, \eqref{odot-commu} and \eqref{odot-prop},  we also  see that
\begin{equation}\label{eq:q002}
(A_+^{1/2} w, a)_{C_{a,b}'}
=(\vartheta_{+}^{1/2}\odot w, a)_{C_{a,b}'} 
=(w \odot \vartheta_{+}^{1/2}, a)_{C_{a,b}'} 
=(w, \vartheta_{+}^{1/2}\odot a)_{C_{a,b}'} 
\end{equation}
and
\begin{equation}\label{eq:q003}
(A_-^{1/2} w, a)_{C_{a,b}'}
=(  w, \vartheta_{-}^{1/2}\odot a)_{C_{a,b}'}.
\end{equation}

\par
Assume that  $F$ is an element of 
$\mathcal{F}_{A}^{\,q_0}\cap\mathcal{G}_{A}^{\,q_0}$
for some $q_0\in(0,1)$ 
where $\mathcal{F}_{A}^{\,q_0}\equiv \mathcal{F}_{A_+,A_-}^{\,\,q_0}$
and $\mathcal{G}_{A}^{\,q_0}\equiv \mathcal{G}_{A_+,A_-}^{\,\,q_0}$
(the classes  $\mathcal{F}_{A_1,A_2}^{\,\, q_0}$ and 
$\mathcal{G}_{A_1,A_2}^{\,\, q_0}$  are defined 
in Section \ref{sec:fab}, respectively).

\par
Applying  \eqref{eq:gfi-delta} with $(q_1,q_2)=(1,-1)$   
and $(g_1, g_2)=(A_+^{1/2}g,-A_-^{1/2}g)$, respectively,  \eqref{eq:h-function}, 
and  \eqref{eq:notation-psi} with $(A_1,A_2)=(A_+,A_-)$,  and using \eqref{eq:q001},  
 we  obtain that
\begin{equation}\label{eq:connection}
\begin{aligned}
&E_{\vec x}^{\text{\rm{anf}}_{(1,-1)}}\big[
\delta F(x_1,x_2|A_+^{1/2}g  , -A_-^{1/2}g  )\big]\\
&=E_{\vec x}^{\text{\rm{anf}}_{(1,-1)}}\big[
  \delta F(x_1,x_2|g_1  , g_2  )\big]\\
& =\int_{C_{a,b}'[0,T]}  \Big[i(A_+ ^{1/2}w,g_1)_{C_{a,b}'}
              +i(A_- ^{1/2}w,g_2)_{C_{a,b}'} \Big]\\
& \qquad\quad \times 
\exp\bigg\{   -\frac{i}{2}((A_+-A_-)w,w)_{C_{a,b}'}\bigg\}\\
& \qquad\quad\times
\exp\bigg\{ i\Big[(-i)^{-1/2}(A_+^{1/2}w,a)_{C_{a,b}'}
   +(i)^{-1/2}(A_-^{1/2}w,a)_{C_{a,b}'}\Big] \bigg\} df(w)\\
& =\int_{C_{a,b}'[0,T]} i(Aw,  g)_{C_{a,b}'}
\exp\bigg\{   -\frac{i}{2}(Aw,w)_{C_{a,b}'}\bigg\}\\
&\quad\times
\exp\bigg\{i\Big[(-i)^{-1/2} (A_+^{1/2}w, a)_{C_{a,b}'} 
    +(i)^{-1/2}(A_-^{1/2} w,  a)_{C_{a,b}'} \Big] \bigg\} df(w).
\end{aligned}
\end{equation}
We also see that for all $\rho_1>0$, $\rho_2>0$ and $h\in \mathbb R$
\[
\begin{aligned}
&\big|\delta F(\rho_1x_1+\rho_1 h g_1,  
     \rho_2x_2+\rho_2h g_2| \rho_1g_1,\rho_2g_2 )\big|\\
& \le 
\int_{C_{a,b}'[0,T]} \Big[\big|(A_+ ^{1/2}w, 
       A_+^{1/2}(\rho_1 g))_{C_{a,b}'} \big|
   +\big|(A_- ^{1/2}w, A_- ^{1/2}(-\rho_1 g))_{C_{a,b}'} \big| \Big] 
d |f| (w) \\
& \le 
\rho_1  \int_{C_{a,b}'[0,T]} \big|(A_+ w,  g )_{C_{a,b}'} \big| 
d |f| (w)
   + \rho_2 \int_{C_{a,b}'[0,T]} \big|(A_- w,  g)_{C_{a,b}'}  \big| 
d |f| (w)\\
& \le 
\big(\rho_1 \|A_+\|_o +\rho_2 \|A_-\|_o\big) \|g\|_{C_{a,b}'}
\int_{C_{a,b}'[0,T]}\|w \|_{C_{a,b}'}d |f| (w).
\end{aligned}
\]
But the last expression above
is bounded and is independent of $(x_1,x_2)\in C_{a,b}^2[0,T]$.
Hence 
$\delta F(\rho_1x_1+\rho_1 h g_1,\rho_2x_2+\rho_2 h g_2
    | \rho_1 g_1,\rho_2 g_2 )$
is $\mu\times\mu$-integrable in 
$(x_1,x_2)\in C_{a,b}^2[0,T]$ for every $\rho_1>0$ and $\rho_2>0$.
Also by Theorems \ref{coro:t1q-feynman}    and  \ref{thm:t1q},
the generalized analytic Feynman integrals
$E_{\vec x}^{\mathrm{anf}_{(1,-1)}} 
   [\delta F(x_1,x_2|A_+^{1/2}g,-A_-^{1/2}g)]$
and
$E_{\vec x}^{\mathrm{anf}_{(1,-1)}}[F(x_1,x_2)]$  exist.
Thus using   \eqref{eq:connection} together with  \eqref{eq:q002}  and \eqref{eq:q003},
\eqref{eq:CSthm}, \eqref{eq:q002} and \eqref{eq:q003} with $w$ replaced with $g$,
 it follows that
\[
\begin{aligned}
&\int_{C_{a,b}'[0,T]} i(Aw,  g)_{C_{a,b}'}
\exp\bigg\{   -\frac{i}{2}(Aw,w)_{C_{a,b}'}\bigg\}\\
&\qquad \times
\exp\bigg\{i\Big[(-i)^{-1/2} (w, \vartheta_{+}^{1/2}\odot a)_{C_{a,b}'} 
   +(i)^{-1/2} (w, \vartheta_{-}^{1/2}\odot a)_{C_{a,b}'} \Big] \bigg\} df(w)\\
&=E_{\vec x}^{\text{\rm{anf}}_{(1,-1)}}\big[
\delta F(x_1,x_2)|A_+^{1/2}g  , -A_-^{1/2}g  )\big]\\
&=-iE_{\vec x}^{\mathrm{anf}_{(1,-1)}} 
   \big[F( x_1, x_2)\big\{(A_+^{1/2}g,x_1)^{\sim}
+  (A_-^{1/2}g,x_2)^{\sim} \big\}\big]\\
&\quad
-\Big\{(-i )^{1/2}(A_+^{1/2}g, a)_{C_{a,b}'} 
   -(i)^{1/2}(A_-^{1/2}g,  a)_{C_{a,b}'} \Big\}
E_{\vec x}^{\mathrm{anf}_{(1,-1)}}\big[ F( x_1, x_2) \big]\\
&=-iE_{\vec x}^{\mathrm{anf}_{(1,-1)}} 
   \big[F( x_1, x_2)\big\{(A_+^{1/2}g,x_1)^{\sim}
+  (A_-^{1/2}g,x_2)^{\sim} \big\}\big]\\
&\quad
-\Big\{(-i )^{1/2}(g, \vartheta_{+}^{1/2}\odot a)_{C_{a,b}'} 
   -(i)^{1/2}(g, \vartheta_{-}^{1/2}\odot a)_{C_{a,b}'} \Big\}
E_{\vec x}^{\mathrm{anf}_{(1,-1)}}\big[ F( x_1, x_2) \big].
\end{aligned}
\]
This result together with \eqref{eq:Fab-feynman} yields the formula
\begin{equation}\label{eq:final-display}
\begin{aligned}
&E_{\vec x}^{\mathrm{anf}_{(1,-1)}} 
   \big[F( x_1, x_2)\big\{(A_+^{1/2}g,x_1)^{\sim}
+  (A_-^{1/2}g,x_2)^{\sim}\big\}\big]\\
&=i\Big\{(-i )^{1/2}(g, \vartheta_{+}^{1/2}\odot a)_{C_{a,b}'} 
   -(i)^{1/2}(g, \vartheta_{-}^{1/2}\odot a)_{C_{a,b}'} \Big\}\\
&\quad\times
 \int_{C_{a,b}'[0,T]}\exp\bigg\{-\frac{i}{2}(Aw,w)_{C_{a,b}'}\bigg\} \\
&\qquad\quad\times
\exp\bigg\{i\Big[(-i)^{-1/2} (w, \vartheta_{+}^{1/2}\odot a)_{C_{a,b}'} 
   +(i)^{-1/2} (w,  \vartheta_{-}^{1/2}\odot a)_{C_{a,b}'} \Big] \bigg\} df(w)\\
&\quad
-\int_{C_{a,b}'[0,T]} (Aw,  g)_{C_{a,b}'}
\exp\bigg\{   -\frac{i}{2}(Aw,w)_{C_{a,b}'}\bigg\}\\
&\qquad\quad\times
\exp\bigg\{i\Big[(-i)^{-1/2} (  w,\vartheta_{+}^{1/2}\odot a)_{C_{a,b}'} 
   +(i)^{-1/2} ( w, \vartheta_{-}^{1/2}\odot  a)_{C_{a,b}'} \Big] \bigg\} df(w).
\end{aligned}
\end{equation}


From this, we obtain more explicit formulas as follows.

\noindent\textbf{Step 1.} Under the special setting of $\vartheta$ in 
equation \eqref{eq:final-op}  above,  we first observe the following table:
\begin{table}[ht]  \label{table-c}
\begin{center}
\begin{tabular}{ | c || c | c | c | c | c | c | c | c | c | c | c | c |}
\hline
$\vartheta$ & $\theta$ & $\theta^+$ & $\theta^-$ & $\sqrt{\theta^+}$ & $\sqrt{\theta^-}$ & $\vartheta_+^{1/2}$ & $\vartheta_-^{1/2}$ &     $A$ & $A_+$ & $A_-$ & $A_+^{1/2}$ & $A_-^{1/2}$ \\ \hline \hline
   \,\, $b$ & \,\,\,$1$      & $1$        & $0$        & $1$               & $0$               & $b$                 & $0$                 & \,\,$I$ & $I$   & $0$   & $I$         & $0$         \\ \hline
       $-b$ & $-1$     & $0$        & $1$        & $0$               & $1$               & $0$                 & $b$                 &    $-I$ & $0$   & $I$   & $0$         & $I$        \\ \hline
\end{tabular}
\end{center}
\medskip
\caption{The results from the setting of $\vartheta$ in equation \eqref{eq:final-display}}
\end{table}

\noindent
\textbf{Step 2.}   Using equation \eqref{eq:final-display} above together 
with Table 1 above we obtain the following two explicit formulas:   
\[
\begin{aligned}
&E_{\vec x}^{\mathrm{anf}_{(1,-1)}} 
   \bigg[ (g,x_1)^{\sim} \int_{C_{a,b}'[0,T]}\exp\big \{i(w,x_1)\big \}d f(w)  \bigg]\\
&=i(-i )^{1/2}(g, a)_{C_{a,b}'}\int_{C_{a,b}'[0,T]}\exp\bigg\{-\frac{i}{2}\|w\|_{C_{a,b}'}^2+i(-i)^{-1/2} (w,  a)_{C_{a,b}'}  \bigg\} df(w)\\
&\quad
-\int_{C_{a,b}'[0,T]} (w,  g)_{C_{a,b}'}
\exp\bigg\{   -\frac{i}{2}\|w\|_{C_{a,b}'}^2+i (-i)^{-1/2} (  w, a)_{C_{a,b}'}  \bigg\} df(w) 
\end{aligned}
\]
and 
\[
\begin{aligned}
&E_{\vec x}^{\mathrm{anf}_{(1,-1)}} 
   \bigg[ (g,x_2)^{\sim} \int_{C_{a,b}'[0,T]}\exp\big \{-i(w,x_2)\big \}d f(w) \bigg]\\
&= -i(i)^{1/2}(g,  a)_{C_{a,b}'} 
 \int_{C_{a,b}'[0,T]}\exp\bigg\{ \frac{i}{2}\|w\|_{C_{a,b}'}^2
   +i(i)^{-1/2} (w,  a)_{C_{a,b}'}   \bigg\} df(w)\\
&\quad
+\int_{C_{a,b}'[0,T]} (w,  g)_{C_{a,b}'}
\exp\bigg\{    \frac{i}{2}\|w\|_{C_{a,b}'}^2
     -i(i)^{-1/2} ( w,  a)_{C_{a,b}'}  \bigg\} df(w).
\end{aligned}
\]

\noindent
\textbf{Other setting.} Letting $\vartheta =\sin \big(\frac{\pi b(t)}{ b(T)}\big)$ in equation \eqref{eq:final-display} above, 
it also follows that 
\begin{align*} 
\theta(t)&=D \vartheta (t) =\frac{\pi }{b(T)} \cos \bigg(\frac{\pi b(t)}{ b(T)}\bigg),\\
\sqrt{\theta^+} (t) &=\frac{\pi }{ b(T)} \cos^+ \bigg(\frac{\pi b(t)}{ b(T)}\bigg)=\frac{\pi }{ b(T)} 
\cos  \bigg(\frac{\pi b(t)}{ b(T)}\bigg)\chi_{[0,b^{-1} (\frac{b(T)}{2} ) ] }(t) , \\
\sqrt{\theta^-} (t) &=\frac{\pi }{b(T)} \cos^- \bigg(\frac{\pi b(t)}{b(T)}\bigg)=-\frac{\pi }{ b(T)} 
\cos  \bigg(\frac{\pi b(t)}{ b(T)}\bigg) \chi_{[b^{-1} (\frac{b(T)}{2} ),T ]}(t) ,\\
A w(t)&=\int_0^t \frac{\pi }{b(T)} \cos \bigg(\frac{\pi b(t)}{b(T)}\bigg) d w(t),\\
A_+^{1/2}w(t) 
&=\int_0^t \frac{\pi }{b(T)} \cos^+ \bigg(\frac{\pi b(t)}{b(T)}\bigg) d w(t)\\
&=\int_0^t \frac{\pi }{b(T)} \cos  \bigg(\frac{\pi b(t)}{b(T)}\bigg) \chi_{[0,b^{-1}(\frac{b(T)}{2})]}(t)d w(t),\\
\intertext{and}\\
A_-^{1/2}w(t)
&=\int_0^t \frac{\pi }{b(T)} \cos^- \bigg(\frac{\pi b(t)}{b(T)}\bigg) d w(t)\\
&=-\int_0^t \frac{\pi }{b(T)} \cos  \bigg(\frac{\pi b(t)}{b(T)}\bigg) \chi_{[b^{-1}(\frac{b(T)}{2}),T]}(t)d w(t).
\end{align*}
 Using these we can replace equation \eqref{eq:final-display} 
with the following 8 formulas 
\begin{align*}
(A_+^{1/2}g,x_1)^{\sim}
&=\frac{\pi }{b(T)}\int_0^{b^{-1}(\frac{b(T)}{2})}\cos\bigg(\frac{\pi b(t)}{b(T)}\bigg)  Dg(t)dx_1(t),\\
(A_-^{1/2}g,x_2)^{\sim}
&=-\frac{\pi }{b(T)}\int_{b^{-1}(\frac{b(T)}{2})}^T  \cos  \bigg(\frac{\pi b(t)}{b(T)}\bigg) Dg(t)dx_2(t), \\
(Aw,w)_{C_{a,b}'}
& = \bigg(\frac{\pi }{b(T)}\bigg)^2 \int_0^T \cos^2 \bigg(\frac{\pi b(t)}{b(T)}\bigg)(Dw)^2(t) db(t), \\
(Aw,g)_{C_{a,b}'}
& = \frac{\pi }{b(T)}\int_0^T  \cos \bigg(\frac{\pi b(t)}{b(T)}\bigg) Dw (t)Dg (t) db(t), \\
(g,  \vartheta_{+}^{1/2}\odot a)_{C_{a,b}'} 
&=\frac{\pi }{b(T)}\int_0^{b^{-1}(\frac{b(T)}{2})}  \cos  \bigg(\frac{\pi b(t)}{b(T)}\bigg) Dg (t)Da (t) db(t),\\
(g,  \vartheta_{-}^{1/2}\odot a)_{C_{a,b}'} 
&=-\frac{\pi }{b(T)}\int_{b^{-1}(\frac{b(T)}{2}) }^T  \cos \bigg(\frac{\pi b(t)}{b(T)}\bigg) Dg (t)Da (t) db(t),\\
(w,  \vartheta_{+}^{1/2}\odot a)_{C_{a,b}'} 
&=\frac{\pi }{b(T)}\int_0^{b^{-1}(\frac{b(T)}{2})} \cos  \bigg(\frac{\pi b(t)}{b(T)}\bigg) Dw (t)Da (t) db(t),\\
\intertext{and} 
(w,  \vartheta_{-}^{1/2}\odot a)_{C_{a,b}'} 
&=-\frac{\pi }{b(T)}\int_{b^{-1}(\frac{b(T)}{2}) }^T  \cos \bigg(\frac{\pi b(t)}{b(T)}\bigg) Dw (t)Da (t) db(t).
\end{align*}
In this example, we chose $\vartheta =\sin \big(\frac{\pi b(t)}{ b(T)}\big)$ and, as presented in equation \eqref{eq:final-op},
this function $\vartheta$  gave the operator $A$.

Consider the sequence  $\{e_m\}$ of functions in $C_{a,b}'[0,T]$,
where for each $m\in\mathbb N$, $e_m$ is given by 
\[
e_m(t)=\frac{\sqrt{2b(T)}}{(m-\frac12)\pi} \sin\bigg(\frac{(m-\frac12)\pi}{b(T)}\bigg), \quad t\in [0,T].
\]
One can show that the sequence  $\{e_m\}$ 
is a complete orthonormal set in $C_{a,b}'[0,T]$ and the functions   $\{e_m\}$ are the eigenvectors of the operator
$B: C_{a,b}'[0,T]\to C_{a,b}'[0,T]$ given by
\[
Bw(t)=\int_0^T \min\{b(s),b(t)\}w(s)db(s), \quad s\in [0,T].
\]
It is known that the operator $B$ is a self-adjoint positive definite trace class operator   and is decomposed by
$B=S^*S$ where $S: C_{a,b}'[0,T]\to C_{a,b}'[0,T]$ is the operator given by
\[
Sw(t)=\int_0^t w (s)db(s), \quad s\in [0,T].
\]
Using this, one can easily check that for each $w\in C_{a,b}'[0,T]$,
\[
\int_0^T w^2(s)db(s)=(w,Bw)_{C_{a,b}'}.
\]
For more details, see \cite{CCS07}.
Thus, under these constructions, our results in this paper   can be applicable to
evaluate Feynman integral using Fourier series approximation (see \cite{Fey-Hib}) for functionals of 
generalized Brownian motion paths.



%
%




\end{document}